\theoremstyle{definition}
\newtheorem{theorem}{Theorem}[section]
\newtheorem{definition}[theorem]{Definition}
\newtheorem{lemma}[theorem]{Lemma}
\newtheorem{proposition}[theorem]{Proposition}
\newtheorem{corollary}[theorem]{Corollary}
\newtheorem{remark}[theorem]{Remark}
\newcommand\laatepsilon\varepsilon
\author{Tim de Laat}
\thanks{The author is supported by the Danish National Research Foundation through the Centre for Symmetry and Deformation.}
\address{Department of Mathematical Sciences, University of Copenhagen,
\newline Universitetsparken 5, DK-2100 Copenhagen \O, Denmark}
\email{tlaat@math.ku.dk}
\title[On the JCB Grothendieck Theorem]{On the Grothendieck Theorem for jointly completely bounded bilinear forms}
\begin{document}

\maketitle

\begin{abstract}We show how the proof of the Grothendieck Theorem for jointly completely bounded bilinear forms on $C^{\ast}$-algebras by Haagerup and Musat can be modified in such a way that the method of proof is essentially $C^{\ast}$-algebraic. To this purpose, we use Cuntz algebras rather than type $\mathrm{III}$ factors. Furthermore, we show that the best constant in Blecher's inequality is strictly greater than one.
\end{abstract}

\section{Introduction} \label{laatsec:introduction}
In \cite{laatgrothendieck}, Grothendieck proved his famous \emph{Fundamental Theorem on the metric theory of tensor products}.\index{Grothendieck Theorem} He also conjectured a noncommutative analogue of this theorem for bounded bilinear forms on $C^{\ast}$-algebras. This \emph{noncommutative Grothendieck Theorem} was proved by Pisier assuming a certain approximability condition on the bilinear form \cite{laatpgt}. The general case was proved by Haagerup \cite{laathgi}. Effros and Ruan conjectured a ``sharper'' analogue of this theorem for bilinear forms on $C^{\ast}$-algebras that are jointly completely bounded (rather than bounded) \cite{laateranatos}.\index{Effros-Ruan conjecture} More precisely, they conjectured the following result, with universal constant $K=1$.
\begin{theorem}[JCB Grothendieck Theorem] \label{laattheorem:jcbgt}
  Let $A,B$ be $C^{\ast}$-algebras, and let $u:A \times B \rightarrow \mathbb{C}$ be a jointly completely bounded bilinear form. Then there exist states $f_1,f_2$ on $A$ and $g_1,g_2$ on $B$ such that for all $a \in A$ and $b \in B$,
  \begin{equation} \nonumber
    |u(a,b)| \leq K\|u\|_{jcb} \left( f_1(aa^*)^{\frac{1}{2}}g_1(b^*b)^{\frac{1}{2}} + f_2(a^*a)^{\frac{1}{2}}g_2(bb^*)^{\frac{1}{2}} \right),
  \end{equation}
where $K$ is a constant.
\end{theorem}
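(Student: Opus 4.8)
The plan is to follow the strategy of Haagerup and Musat, after normalising so that $\|u\|_{jcb}=1$, and to begin by reducing to the case of von Neumann algebras. A jcb form on $A\times B$ extends uniquely to a separately normal jcb form on the second duals $A^{**}$ and $B^{**}$ with the same jcb norm, and every normal state on $A^{**}$ restricts, via the canonical embedding $A\hookrightarrow A^{**}$, to a state on $A$; since $aa^{*}$ and $a^{*}a$ are computed identically in $A$ and in $A^{**}$, it suffices to prove the inequality when $A$ and $B$ are von Neumann algebras and $u$ is separately normal. I would keep this reduction in force so that all GNS representations arising later are normal and the vector functionals they produce are genuine normal states.

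Next I would extract from joint complete boundedness the two ``row--column'' matrix estimates that encode the full strength of $\|u\|_{jcb}\le1$. Testing the amplification $u_{n}$ with a row matrix in $M_{n}(A)$ against a column matrix in $M_{n}(B)$, and then with a column against a row, yields for all finite families $(a_{k})$ in $A$ and $(b_{k})$ in $B$ the bounds
\[
\Bigl\|[u(a_{j},b_{k})]_{j,k}\Bigr\|\le\Bigl\|\sum_{k}a_{k}a_{k}^{*}\Bigr\|^{1/2}\Bigl\|\sum_{k}b_{k}^{*}b_{k}\Bigr\|^{1/2}\quad\text{and}\quad\Bigl\|[u(a_{j},b_{k})]_{j,k}\Bigr\|\le\Bigl\|\sum_{k}a_{k}^{*}a_{k}\Bigr\|^{1/2}\Bigl\|\sum_{k}b_{k}b_{k}^{*}\Bigr\|^{1/2}.
\]
These are exactly the row-with-column and column-with-row pairings responsible for the two summands in the statement.

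The heart of the argument, and the place where I would use Cuntz algebras in place of type $\mathrm{III}$ factors, is to convert these matrix estimates into a factorisation through a Hilbert space from which the four states can be read off. Let $S_{1},\dots,S_{n}$ be the canonical generating isometries of $\mathcal{O}_{n}$, so that $S_{i}^{*}S_{j}=\delta_{ij}\mathbf{1}$ and $\sum_{i}S_{i}S_{i}^{*}=\mathbf{1}$. For a family $(a_{k})$ I would form $\sum_{k}a_{k}\otimes S_{k}\in A\otimes\mathcal{O}_{n}$; the Cuntz relations give $(\sum_{k}a_{k}\otimes S_{k})^{*}(\sum_{l}a_{l}\otimes S_{l})=\sum_{k}a_{k}^{*}a_{k}\otimes\mathbf{1}$, so its norm is exactly the column norm $\|\sum_{k}a_{k}^{*}a_{k}\|^{1/2}$, while the dual element $\sum_{k}a_{k}\otimes S_{k}^{*}$ has norm the row norm $\|\sum_{k}a_{k}a_{k}^{*}\|^{1/2}$. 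The point is that the orthogonality of the $S_{i}$ realises the $\ell^{2}$ column/row structure intrinsically inside a single concrete $C^{*}$-algebra, and the canonical gauge-invariant state $\omega$ on $\mathcal{O}_{n}$ supplies the averaging that the modular state of the type $\mathrm{III}$ factor provides in the original proof. Relating $u$ to the amplified form via $\omega$ should bring the \emph{bounded} noncommutative Grothendieck Theorem (Pisier \cite{laatpgt}, Haagerup \cite{laathgi}) to bear; the states it produces, compressed back along $A\hookrightarrow A\otimes\mathcal{O}_{n}$ and likewise for $B$, give $f_{1},f_{2},g_{1},g_{2}$, and Cauchy--Schwarz in the resulting GNS representations separates the two terms.

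I expect the main obstacle to be precisely this transfer step: one must choose the state on $\mathcal{O}_{n}$ and organise the amplification so that it genuinely converts the jcb norm of $u$ into the bounded norm of the amplified form, and so that the universal constant is controlled uniformly in $n$. Passing to the limit then needs a weak-$*$ compactness argument in the product of state spaces, together with a minimax/Hahn--Banach step, to produce a single quadruple $(f_{1},f_{2},g_{1},g_{2})$ valid for all $a$ and $b$ at once. This is exactly the part that in the original proof rests on the modular theory of type $\mathrm{III}$ factors, and the content of the present approach is that the relations $S_{i}^{*}S_{j}=\delta_{ij}\mathbf{1}$ deliver the same orthogonality combinatorics by purely $C^{*}$-algebraic means. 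The same analysis, pushed to optimise the constant, is what feeds the companion conclusion that $K$ cannot be taken equal to $1$.
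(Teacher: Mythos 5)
Your skeleton --- tensor with a Cuntz algebra, transfer the problem to the bounded noncommutative Grothendieck Theorem, pull the resulting states back to $A$ and $B$, and finish with a weak$^*$-compactness argument --- is the same as the paper's, but the step you yourself flag as the main obstacle is exactly the one you have not supplied, and the mechanism you propose in its place does not do the job. The paper does not use the Cuntz relations $S_i^*S_j=\delta_{ij}1$ to encode row/column norms via elements $\sum_k a_k\otimes S_k$; what it actually uses is the unique KMS state $\phi_n=\tau_n\circ F_{n,0}$ for the gauge action, the commutant copy $J\mathcal{O}_nJ$ acting on $L^2(\mathcal{O}_n,\phi_n)$, and the spectral subspaces $\mathcal{O}_n^k$ of the gauge action. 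The transfer (Proposition \ref{laatproposition:tensor}) defines $\hat u(a\otimes c,b\otimes d)=u(a,b)\langle cd\xi_n,\xi_n\rangle$ on $(A\otimes_{\min}\mathcal{O}_n)\times(B\otimes_{\min}J\mathcal{O}_nJ)$ and obtains $\|\hat u\|\le\|u\|_{jcb}$ precisely because $B$ is paired with the \emph{commutant} copy, so that $\|\sum_i c_id_i\|_{\mathcal{B}(L^2)}=\|\sum_i c_i\otimes d_i\|_{\min}$ (using simplicity and nuclearity of $\mathcal{O}_n$); tensoring both factors with $\mathcal{O}_n$ itself, as you suggest, yields no such estimate. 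After the bounded Grothendieck Theorem is applied to $\hat u$, the two summands $f_1(aa^*)^{1/2}g_1(b^*b)^{1/2}$ and $f_2(a^*a)^{1/2}g_2(bb^*)^{1/2}$ are produced by testing against the elements $c_k\in\mathcal{O}_n^k$ of Lemma \ref{laatlemma:ck}, whose asymmetry $\phi_n(c_k^*c_k)=n^{k/2}$, $\phi_n(c_kc_k^*)=n^{-k/2}$ comes from the KMS condition $\phi_n(xy)=n^{-k}\phi_n(yx)$, while the Archbold--Dixmier property relative to $\mathcal{F}_n$ (Proposition \ref{laatproposition:cuntzdixmierproperty}) substitutes for the factoriality of the type III factor. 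None of this appears in your outline, and your two row-with-column estimates do not ``encode the full strength'' of $\|u\|_{jcb}\le1$: the jcb norm tests against arbitrary matrix coefficients, and those two inequalities alone are not known to imply the theorem.

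Two further corrections. The constant is not ``controlled uniformly in $n$'' by optimising over single Cuntz algebras: the method yields $K(n)=\sqrt{(n^{1/2}+n^{-1/2})/2}$, which is minimised at $n=2$ (about $1.03$) and diverges as $n\to\infty$; to reach $K=1$ one must pass to $\mathcal{O}_2\otimes\mathcal{O}_3$, whose gauge action is non-periodic, so that the ratios $\phi(c^*c)/\phi(cc^*)$ realise a set of values accumulating at $1$. Your closing sentence also conflates two different constants: $K=1$ \emph{is} attained in Theorem \ref{laattheorem:jcbgt} (Haagerup--Musat, and Section \ref{laatsec:bestconstant} here); it is the constant in Blecher's inequality, Theorem \ref{laattheorem:blecher2}, that is shown to be strictly greater than $1$.
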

We call this \emph{Grothendieck Theorem for jointly completely bounded bilinear forms on $C^{\ast}$-algebras} the \emph{JCB Grothendieck Theorem}.\index{Grothendieck Theorem!jointly completely bounded} It is often referred to as the \emph{Effros-Ruan conjecture}.

In \cite{laatpsgt}, Pisier and Shlyakhtenko proved a version of Theorem \ref{laattheorem:jcbgt} for exact operator spaces,\index{operator space} in which the constant $K$ depends on the exactness constants of the operator spaces. They also proved the conjecture for $C^{\ast}$-algebras, assuming that at least one of them is exact, with universal constant $K=2^{\frac{3}{2}}$.

Haagerup and Musat proved the general conjecture (for $C^{\ast}$-algebras), i.e., Theorem \ref{laattheorem:jcbgt}, with universal constant $K=1$ \cite{laathmeffrosruan}. They used certain type $\mathrm{III}$ factors in the proof. Since the conjecture itself is purely $C^{\ast}$-algebraic, it would be more satisfactory to have a proof that relies on $C^{\ast}$-algebras. In this note, we show how the proof of Haagerup and Musat can be modified in such a way that essentially only $C^{\ast}$-algebraic arguments are used. Indeed, in their proof, one tensors the $C^{\ast}$-algebras on which the bilinear form is defined with certain type $\mathrm{III}$ factors, whereas we show that it also works to tensor with certain simple nuclear $C^{\ast}$-algebras admitting KMS states instead. We then transform the problem back to the (classical) noncommutative Grothendieck Theorem, as was also done by Haagerup and Musat.

Recently, Regev and Vidick gave a more elementary proof of both the JCB Grothendieck Theorem for $C^{\ast}$-algebras and its version for exact operator spaces \cite{laatregevvidick}. Their proof makes use of methods from quantum information theory and has the advantage that the transformation of the problem to the (classical) noncommutative Grothendieck Theorem is more explicit and based on finite-dimensional techniques. Moreover, they obtain certain new quantitative estimates.

For an extensive overview of the different versions of the Grothendieck Theorem, as well as their proofs and several applications, we refer to \cite{laatpisiersurvey}.

This text is organized as follows. In Section \ref{laatsec:bilforms}, we recall two different notions of complete boundedness for bilinear forms on operator spaces. In Section \ref{laatsec:cuntzalgebras}, we recall some facts about Cuntz algebras and their KMS states. This is needed for the proof of the JCB Grothendieck Theorem, which is given in Section \ref{laatsec:jcbgt} (with a constant $K > 1$) by using (single) Cuntz algebras. We explain how to obtain $K=1$ in Section \ref{laatsec:bestconstant}. In Section \ref{laatsec:blecher}, we show that using a recent result by Haagerup and Musat on the best constant in the noncommutative little Grothendieck Theorem, we are able to improve the best constant in Blecher's inequality.

\section{Bilinear forms on operator spaces} \label{laatsec:bilforms}
Recall that an operator space $E$ is a closed linear subspace of $\mathcal{B}(H)$ for some Hilbert space $H$. For $n \geq 1$, the embedding $M_n(E) \subset M_n(\mathcal{B}(H)) \cong \mathcal{B}(H^n)$ gives rise to a norm $\|.\|_n$ on $M_n(E)$. In particular, $C^{\ast}$-algebras are operator spaces. A linear map $T:E \rightarrow F$ between operator spaces induces a linear map $T_{n}:M_{n}(E) \rightarrow M_{n}(F)$ for each $n \in \mathbb{N}$, defined by $T_n([x_{ij}])=[T(x_{ij})]$ for all $x = [x_{ij}] \in M_n(E)$. The map $T$ is called completely bounded \index{completely bounded map} if the completely bounded norm $\|T\|_{cb}:=\sup_{n \geq 1} \|T_n\|$ is finite. 

There are two common ways to define a notion of complete boundedness for bilinear forms on operator spaces. For the first one, we refer to \cite{laatchristensensinclair}. Let $E$ and $F$ be operator spaces contained in $C^{\ast}$-algebras $A$ and $B$, respectively, and let $u:E \times F \rightarrow \mathbb{C}$ be a bounded bilinear form. Let $u_{(n)}:M_n(E) \times M_n(F) \rightarrow M_n(\mathbb{C})$ be the map defined by $([a_{ij}],[b_{ij}]) \mapsto \left[ \sum_{k=1}^n u(a_{ik},b_{kj}) \right]$.
\begin{definition}
  The bilinear form $u$ is called \emph{completely bounded} \index{bilinear form!completely bounded} if
\begin{equation} \nonumber
  \|u\|_{cb}:=\sup_{n \geq 1} \|u_{(n)}\|
\end{equation}
is finite. We put $\|u\|_{cb}=\infty$ if $u$ is not completely bounded.
\end{definition}
Equivalently (see Section 3 of \cite{laathmeffrosruan} or the Introduction of \cite{laatpsgt}), $u$ is completely bounded if there exists a constant $C \geq 0$ and states $f$ on $A$ and $g$ on $B$ such that for all $a \in E$ and $b \in F$,
\begin{equation} \label{laateq:cbconstant}
  |u(a,b)| \leq Cf(aa^*)^{\frac{1}{2}}g(b^*b)^{\frac{1}{2}},
\end{equation}
and $\|u\|_{cb}$ is the smallest constant $C$ such that \eqref{laateq:cbconstant} holds.

For the second notion, we refer to \cite{laatblecherpaulsen}, \cite{laateranatos}. Let $E$ and $F$ be operator spaces contained in $C^{\ast}$-algebras $A$ and $B$, respectively, and let $u:E \times F \rightarrow \mathbb{C}$ be a bounded bilinear form. Then there exists a unique bounded linear operator $\tilde{u}:E \rightarrow F^*$ such that
\begin{equation} \nonumber
  u(a,b)=\langle \tilde{u}(a),b \rangle
\end{equation}
for all $a \in E$ and $b \in F$, where $\langle .,. \rangle$ denotes the pairing between $F$ and its dual.
\begin{definition}
  The bilinear form $u$ is called \emph{jointly completely bounded} \index{bilinear form!jointly completely bounded} if the map $\tilde{u}:E \rightarrow F^*$ is completely bounded, and we set
\begin{equation} \nonumber
  \|u\|_{jcb}:=\|\tilde{u}\|_{cb}.
\end{equation}
We put $\|u\|_{jcb}=\infty$ if $u$ is not jointly completely bounded.
\end{definition}
Equivalently, if we define maps $u_n:M_n(E) \otimes M_n(F) \rightarrow M_n(\mathbb{C}) \otimes M_n(\mathbb{C})$ by
\[
  u_n\left( \sum_{i=1}^k a_i \otimes c_i,\sum_{j=1}^l b_j \otimes d_j \right) = \sum_{i=1}^k\sum_{j=1}^l u(a_i,b_j)c_i \otimes d_j
\]
for $a_1,\ldots,a_k \in A$, $b_1,\ldots,b_l \in B$, and $c_1,\ldots,c_k,d_1,\ldots,d_l \in M_n(\mathbb{C})$, then we have $\|u\|_{jcb}=\sup_{n \geq 1} \|u_n\|$.

\section{KMS states on Cuntz algebras} \label{laatsec:cuntzalgebras}
For $2 \leq n < \infty$, let $\mathcal{O}_n$ denote the Cuntz algebra \index{Cuntz algebra} generated by $n$ isometries, as introduced by Cuntz in \cite{laatcuntzalgebra}, in which one of the main results is that the algebras $\mathcal{O}_n$ are simple. We now recall some results by Cuntz. If $\alpha=(\alpha_1,\ldots,\alpha_k)$ denotes a multi-index of length $k=l(\alpha)$, where $\alpha_j \in \{1,\ldots,n\}$ for all $j$, we write $S_{\alpha}=S_{\alpha_1}\ldots S_{\alpha_k}$, and we put $S_0=1$. It follows that for every nonzero word $M$ in $\{S_i\}_{i=1}^n \bigcup \{S_i^*\}_{i=1}^n$, there are unique multi-indices $\mu$ and $\nu$ such that $M=S_{\mu}S_{\nu}^*$.

For $k \geq 1$, let $\mathcal{F}_n^k$ be the $C^{\ast}$-algebra generated by $\{S_{\mu}S_{\nu}^* \mid l(\mu)=l(\nu)=k\}$, and let $\mathcal{F}_n^0=\mathbb{C}1$. It follows that $\mathcal{F}_n^k$ is $*$-isomorphic to $M_{n^k}(\mathbb{C})$, and, as a consequence, $\mathcal{F}_n^k \subset \mathcal{F}_n^{k+1}$. The $C^{\ast}$-algebra $\mathcal{F}_n$ generated by $\bigcup_{k=0}^\infty \mathcal{F}_n^k$ is a UHF-algebra of type $n^{\infty}$.

If we write $\mathcal{P}_n$ for the algebra generated algebraically by $S_1,\ldots,S_n$, $S_1^*,\ldots,S_n^*$, each element $A$ in $\mathcal{P}_n$ has a unique representation
\begin{equation} \nonumber
  A = \sum_{k=1}^N (S_1^*)^k A_{-k} + A_0 + \sum_{k=1}^N A_kS_1^k,
\end{equation}
where $N \in \mathbb{N}$ and $A_k \in \mathcal{P}_n \cap \mathcal{F}_n$. The maps $F_{n,k}:\mathcal{P}_n \rightarrow \mathcal{F}_n$ ($k \in \mathbb{Z}$) defined by $F_{n,k}(A)=A_k$ extend to norm-decreasing maps $F_{n,k}:\mathcal{O}_n \rightarrow \mathcal{F}_n$. It follows that $F_{n,0}$ is a conditional expectation.

The existence of a unique KMS state on each Cuntz algebra was proved by Olesen and Pedersen \cite{laatolesenpedersen}. Firstly, we give some background on $C^{\ast}$-dynamical systems.
\begin{definition}
  A $C^{\ast}$-dynamical system $(A,\mathbb{R},\rho)$ \index{$C^{\ast}$-dynamical system} consists of a $C^{\ast}$-algebra $A$ and a representation $\rho:\mathbb{R} \rightarrow \mathrm{Aut}(A)$, such that each map $t \mapsto \rho_t(a)$, $a \in A$, is norm continuous.
\end{definition}
$C^{\ast}$-dynamical systems can be defined in more general settings. In particular, one can replace $\mathbb{R}$ with arbitrary locally compact groups.

Let $A^a$ denote the dense $*$-subalgebra of $A$ consisting of analytic elements, i.e., $a \in A^a$ if the function $t \mapsto \rho_t(a)$ has a (necessarily unique) extension to an entire operator-valued function. This extension is implicitly used in the following definition.
\begin{definition} \label{laatdefinition:kmsstate}
  Let $(A,\mathbb{R},\rho)$ be a $C^{\ast}$-dynamical system. An invariant state $\phi$ on $A$, i.e., a state for which $\phi \circ \rho_t=\phi$ for all $t \in \mathbb{R}$, is a KMS state \index{KMS state} if
\begin{equation} \nonumber
 \phi(\rho_{t+i}(a)b)=\phi(b\rho_{t}(a))
\end{equation}
for all $a \in A^a$, $b \in A$ and $t \in \mathbb{R}$.
\end{definition}
This definition is similar to the one introduced by Takesaki (see \cite{laattakesakitomita}, Definition 13.1). It corresponds to $\phi$ being a $\beta$-KMS state for $\rho_{-t}$ with $\beta=1$ according to the conventions of \cite{laatbrattelirobinson2} and \cite{laatolesenpedersen}. In the latter, the following two results were proved (see Lemma 1 and Theorem 2 therein). We restate these results slightly according to the conventions of Definition \ref{laatdefinition:kmsstate}.
\begin{proposition}\emph{(Olesen-Pedersen)}
  For all $t \in \mathbb{R}$ and the generators $\{S_k\}_{k=1}^n$ of $\mathcal{O}_n$, define $\rho^n_t(S_k)=n^{it}S_k$. Then $\rho^n_t$ extends uniquely to a $*$-automorphism of $\mathcal{O}_n$ for every $t \in \mathbb{R}$ in such a way that $(\mathcal{O}_n,\mathbb{R},\rho^n)$ becomes a $C^{\ast}$-dymamical system. Moreover, $\mathcal{F}_n$ is the fixed-point algebra of $\rho^n$ in $\mathcal{O}_n$, and $\mathcal{P}_n \subset (\mathcal{O}_n)^a$.
\end{proposition}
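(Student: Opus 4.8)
The plan is to build $\rho^n_t$ from the universal property of $\mathcal{O}_n$, verify the one-parameter group and continuity axioms, and then read off the fixed-point algebra and the analyticity of $\mathcal{P}_n$ from the explicit action on words. First I would observe that the elements $T_k := n^{it}S_k$ again satisfy the Cuntz relations: since $|n^{it}|=1$, one has $T_j^*T_k = \overline{n^{it}}n^{it}S_j^*S_k = \delta_{jk}1$ and $\sum_{k=1}^n T_kT_k^* = \sum_{k=1}^n S_kS_k^* = 1$. Hence the universal property of $\mathcal{O}_n$ (equivalently, its simplicity) yields a unique $*$-homomorphism $\rho^n_t:\mathcal{O}_n \to \mathcal{O}_n$ with $\rho^n_t(S_k) = n^{it}S_k$. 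Computing on generators gives $\rho^n_s \circ \rho^n_t(S_k) = n^{i(s+t)}S_k = \rho^n_{s+t}(S_k)$, and since $\mathcal{P}_n$ is dense this forces $\rho^n_s\circ\rho^n_t = \rho^n_{s+t}$; in particular $\rho^n_{-t}$ inverts $\rho^n_t$, so each $\rho^n_t$ is a $*$-automorphism and $t \mapsto \rho^n_t$ is a homomorphism $\mathbb{R}\to\mathrm{Aut}(\mathcal{O}_n)$.

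For norm continuity I would first treat $a\in\mathcal{P}_n$. On a word $S_\mu S_\nu^*$ with $l(\mu)=p$ and $l(\nu)=q$ one computes $\rho^n_t(S_\mu S_\nu^*) = n^{it(p-q)}S_\mu S_\nu^*$, so $t\mapsto\rho^n_t(a)$ is a finite sum of scalar exponentials times fixed words and is therefore norm continuous (indeed entire in $t$). For general $a\in\mathcal{O}_n$ I would use an $\varepsilon/3$-argument: approximating $a$ by $b\in\mathcal{P}_n$ and using that each $\rho^n_t$ is isometric gives $\|\rho^n_t(a)-\rho^n_s(a)\|\leq 2\|a-b\| + \|\rho^n_t(b)-\rho^n_s(b)\|$, which is controlled by the polynomial case. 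This establishes that $(\mathcal{O}_n,\mathbb{R},\rho^n)$ is a $C^*$-dynamical system.

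For the fixed-point algebra, the inclusion $\mathcal{F}_n\subseteq(\mathcal{O}_n)^{\rho^n}$ is immediate from the same formula: the generators $S_\mu S_\nu^*$ of $\mathcal{F}_n$ have $p=q$ and are hence fixed. The reverse inclusion is the main point. Here I would average over one period of the action: since $t\mapsto n^{it}$ wraps $\mathbb{R}$ onto the gauge circle with period $2\pi/\log n$, the Bochner integral $P(a):=\frac{\log n}{2\pi}\int_0^{2\pi/\log n}\rho^n_t(a)\,dt$ is well defined by the continuity just proved. Evaluating on a word gives $P(S_\mu S_\nu^*) = \delta_{pq}S_\mu S_\nu^*$, so $P$ agrees with the degree-zero expectation $F_{n,0}$ on $\mathcal{P}_n$ and hence, by continuity, on all of $\mathcal{O}_n$; thus $P=F_{n,0}$ maps $\mathcal{O}_n$ onto $\mathcal{F}_n$. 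If $a$ is fixed, then $\rho^n_t(a)=a$ for every $t$ forces $P(a)=a$, whence $a=F_{n,0}(a)\in\mathcal{F}_n$.

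Finally, $\mathcal{P}_n\subseteq(\mathcal{O}_n)^a$ follows by exhibiting the entire extension explicitly: the assignment $z\mapsto n^{iz(p-q)}S_\mu S_\nu^*$ is entire and agrees with $t\mapsto\rho^n_t(S_\mu S_\nu^*)$ on the real axis, and a finite linear combination of entire maps is entire, so every $a\in\mathcal{P}_n$ has an entire orbit map. The one genuinely delicate step is the reverse fixed-point inclusion, where the averaging integral and its identification with $F_{n,0}$ do the real work; everything else reduces to the relation $|n^{it}|=1$ and the density of $\mathcal{P}_n$.
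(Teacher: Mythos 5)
Your proof is correct. Note that the paper itself gives no argument for this proposition: it is quoted directly from Olesen and Pedersen (Lemma 1 and Theorem 2 of their paper), so there is nothing internal to compare against. Your route is the standard one and all the steps check out: the rescaled isometries $n^{it}S_k$ satisfy the Cuntz relations, so the uniqueness/universality theorem for $\mathcal{O}_n$ produces the automorphisms; the group law and norm continuity follow from density of $\mathcal{P}_n$ together with the fact that each $\rho^n_t$ is isometric; and the computation $\rho^n_t(S_\mu S_\nu^*)=n^{it(l(\mu)-l(\nu))}S_\mu S_\nu^*$ gives both the analyticity of $\mathcal{P}_n$ and the easy inclusion $\mathcal{F}_n\subseteq(\mathcal{O}_n)^{\rho^n}$. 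The one step you rightly flag as delicate, the reverse inclusion, is handled correctly: the gauge action is periodic with period $2\pi/\log n$, the averaged map $P$ is a norm-continuous projection whose value on a word is $\delta_{l(\mu),l(\nu)}S_\mu S_\nu^*$, hence $P=F_{n,0}$ by density, and a fixed point satisfies $a=P(a)\in\mathcal{F}_n$. This is exactly the averaging argument used in the literature for gauge actions on Cuntz algebras, and it meshes with the paper's later use of $F_{n,0}$ as the conditional expectation defining $\phi_n=\tau_n\circ F_{n,0}$.
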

Let $\tau_n=\otimes_{k=1}^{\infty} \frac{1}{n} \mathrm{Tr}$ denote the unique tracial state on $\mathcal{F}_n$.
\begin{proposition}\emph{(Olesen-Pedersen)}
 For $n \geq 2$, the $C^{\ast}$-dynamical system given by $(\mathcal{O}_n,\mathbb{R},\rho^n)$ has exactly one KMS state, namely $\phi_n=\tau_n \circ F_{n,0}$.
\end{proposition}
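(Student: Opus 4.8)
The plan is to prove existence and uniqueness of the KMS state separately, exploiting that, in the sense of Definition \ref{laatdefinition:kmsstate}, a KMS state is by assumption $\rho^n$-invariant. For existence I would first note that $\phi_n=\tau_n\circ F_{n,0}$ is a state, since $F_{n,0}$ is a unital completely positive conditional expectation onto $\mathcal{F}_n$ and $\tau_n$ is a state, so the composite is a state. It is $\rho^n$-invariant because $\rho^n_t$ multiplies the degree-$k$ component $F_{n,k}(a)$ by $n^{itk}$ and hence fixes the degree-$0$ part, giving $F_{n,0}\circ\rho^n_t=F_{n,0}$ and therefore $\phi_n\circ\rho^n_t=\phi_n$. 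The substance is then the boundary condition. By the remark preceding the statement it is cleanest to work with $\rho^n_{-t}$, for which $\phi_n$ is a $\beta=1$ KMS state in the Bratteli--Robinson sense: under $\rho^n_{-t}$ a word $a=S_\mu S_\nu^*$ of degree $d=l(\mu)-l(\nu)$ is scaled by $n^{-itd}$, with entire continuation scaling it by $n^{d}n^{-itd}$ at argument $t+i$. Since $\mathcal{P}_n$ is a dense $*$-subalgebra of analytic elements (first Olesen--Pedersen proposition) and the condition is separately linear and norm-continuous in $a$ and $b$, it suffices to test it on words; substituting and cancelling the common factor $n^{-itd}$, the whole boundary condition collapses to the single twisted-trace identity
\[
  \phi_n(ab)=n^{-d}\phi_n(ba),\qquad d=\deg a.
\]

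To verify this identity I would use that $\phi_n$ annihilates every word of nonzero degree, so that both sides vanish unless $\deg b=-d$; in that case I reduce the products $S_\mu S_\nu^*S_\sigma S_\tau^*$ and $S_\sigma S_\tau^*S_\mu S_\nu^*$ by means of $S_i^*S_j=\delta_{ij}1$ and the explicit values $\tau_n(S_\alpha S_\beta^*)=n^{-l(\alpha)}\delta_{\alpha\beta}$ (the normalised trace of a matrix unit in $\mathcal{F}_n^{l(\alpha)}\cong M_{n^{l(\alpha)}}$). The point is that passing the isometries across one another shifts the level at which the surviving matrix unit sits by exactly $d$, and since $\tau_n$ rescales by $1/n$ per level, the factor $n^{-d}$ emerges automatically; already the representative cases $a=S_k,\ b=S_k^*$ and $a=S_1S_2,\ b=S_2^*S_1^*$ exhibit the mechanism. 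Linearity and density of $\mathcal{P}_n$, together with norm-continuity, then extend the identity to all $a,b$.

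For uniqueness, let $\psi$ be any KMS state; by Definition \ref{laatdefinition:kmsstate} it is $\rho^n$-invariant. Because $n^{it}$ runs through the whole circle as $t$ varies, $\rho^n$ is in effect a gauge action and $F_{n,0}$ is its normalised average over a period, so invariance of $\psi$ yields $\psi=\psi\circ F_{n,0}$. On $\mathcal{F}_n$ the dynamics is trivial, as $\mathcal{F}_n$ is the fixed-point algebra; hence for analytic $a\in\mathcal{F}_n^k$ and $b\in\mathcal{F}_n$ the boundary condition reads $\psi(ab)=\psi(ba)$, so $\psi$ restricts to a tracial state on the UHF-algebra $\mathcal{F}_n$ and therefore equals the unique tracial state $\tau_n$ there. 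Combining, $\psi=\psi\circ F_{n,0}=\tau_n\circ F_{n,0}=\phi_n$.

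The main obstacle is the existence half, and specifically the bookkeeping of the powers of $n$ in the twisted-trace identity, together with making the reduction to words fully rigorous (controlling the entire continuation $\rho^n_{-(t+i)}$ and the interchange of $\phi_n$ with limits). One must also be careful about the convention: written literally as $\rho^n_t(S_k)=n^{it}S_k$, the boundary condition of Definition \ref{laatdefinition:kmsstate} is the genuine KMS condition only for $\rho^n_{-t}$, which is exactly the point of the remark and the reason the exponent in the displayed identity carries a minus sign.
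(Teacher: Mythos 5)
The paper itself offers no proof of this proposition: it is quoted from Olesen and Pedersen (Lemma~1 and Theorem~2 of that reference), so there is no in-paper argument to compare against. Your reconstruction is essentially the standard one and, as far as I can tell, the same mechanism as in the original source: existence by reducing the boundary condition on the dense invariant $*$-subalgebra $\mathcal{P}_n$ of analytic elements to a single twisted-trace identity and checking it on words $S_\mu S_\nu^*$; uniqueness by noting that invariance forces $\psi=\psi\circ F_{n,0}$ (since $F_{n,0}$ is the average of the periodic action over a period), while the boundary condition on the fixed-point algebra forces $\psi|_{\mathcal{F}_n}$ to be tracial, hence equal to $\tau_n$ by uniqueness of the tracial state on a UHF algebra. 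Both halves are sound, and your identification of where the real work lies is accurate: in the only surviving case one gets $\phi_n(S_\mu S_\nu^*S_\sigma S_\tau^*)=n^{-l(\tau)}$ against $\phi_n(S_\sigma S_\tau^*S_\mu S_\nu^*)=n^{-l(\sigma)}$ with $l(\tau)-l(\sigma)=l(\mu)-l(\nu)=d$, which is exactly your factor $n^{-d}$.

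Two caveats, neither fatal. First, the sign: the identity you land on, $\phi_n(ab)=n^{-d}\phi_n(ba)$ with $d=\deg a$ and $\deg S_k=+1$, is the one the state $\tau_n\circ F_{n,0}$ actually satisfies (test $a=S_1$, $b=S_1^*$: $1/n=n^{-1}\cdot 1$). Be aware that it is opposite in sign to the convention the paper uses in its Section~4 lemma, where $\phi_n(xy)=n^{-k}\phi_n(yx)$ is asserted on $\{x:\rho^n_t(x)=n^{-ikt}x\}$, i.e.\ $\phi_n(xy)=n^{+\deg x}\phi_n(yx)$; the paper's stated action $\rho^n_t(S_k)=n^{it}S_k$ and its Definition of a KMS state are not mutually consistent on this point, and you have (correctly, in my view) resolved the ambiguity in the direction that the literal formula for $\rho^n$ demands. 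Second, passing from $\mathcal{P}_n$ to all analytic $a$ is not a matter of norm-continuity in $a$ (analytic continuation is not norm-continuous on the algebra of analytic elements); the right justification is the standard fact that the KMS condition need only be verified on a dense, invariant $*$-subalgebra of analytic elements, which $\mathcal{P}_n$ is by the first Olesen--Pedersen proposition.
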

For a $C^{\ast}$-algebra $A$, let $\mathcal{U}(A)$ denote its unitary group. The following result was proved by Archbold \cite{laatarchboldcuntz}. It implies the \emph{Dixmier property} for $\mathcal{O}_n$.
\begin{proposition} \label{laatproposition:cuntzdixmierproperty} \emph{(Archbold)} 
For all $x \in \mathcal{O}_n$,
\begin{equation} \nonumber
  \phi_n(x)1_{\mathcal{O}_n} \in \overline{\mathrm{conv}\{uxu^* \,|\, u \in \mathcal{U}(\mathcal{F}_n)\}}^{\|.\|}.
\end{equation}
\end{proposition}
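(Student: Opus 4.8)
The plan is to prove the result by a single, explicit averaging construction over the finite-dimensional subalgebras $\mathcal{F}_n^k \cong M_{n^k}$. For each $k$ I would define a map $E_k:\mathcal{O}_n \to \mathcal{O}_n$ by integrating conjugation over the compact unitary group $\mathcal{U}(\mathcal{F}_n^k)$ against normalized Haar measure $du$,
\[
  E_k(x)=\int_{\mathcal{U}(\mathcal{F}_n^k)} u x u^* \, du .
\]
Since $\mathcal{F}_n^k$ and $\mathcal{O}_n$ share the same unit $\sum_{l(\mu)=k} S_\mu S_\mu^*=1$, we have $\mathcal{U}(\mathcal{F}_n^k)\subseteq\mathcal{U}(\mathcal{F}_n)$; and because $du$ is a probability measure and $u\mapsto uxu^*$ is norm-continuous, the Bochner integral $E_k(x)$ lies in $\overline{\mathrm{conv}\{uxu^* \mid u\in\mathcal{U}(\mathcal{F}_n)\}}^{\|.\|}$. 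Moreover each $E_k$ is a norm-one map, being an average of $*$-automorphisms. The entire problem therefore reduces to showing that $\|E_k(x)-\phi_n(x)1\|\to 0$ as $k\to\infty$.

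Because the $E_k$ are contractive and $x\mapsto\phi_n(x)1$ is continuous, it suffices to prove this convergence on the dense $*$-subalgebra $\mathcal{P}_n$, hence on a single reduced word $x=S_\mu S_\nu^*$, and I would split according to the gauge degree $l(\mu)-l(\nu)$. If $l(\mu)=l(\nu)=:p$ and $k\geq p$, then $S_\mu S_\nu^*\in\mathcal{F}_n^k\cong M_{n^k}$, and the standard identity $\int_{U(N)} u y u^*\,du=\tau_N(y)1$ for $y\in M_N$ gives $E_k(S_\mu S_\nu^*)=\tau_n(S_\mu S_\nu^*)1=\phi_n(S_\mu S_\nu^*)1$ \emph{exactly}, using that $F_{n,0}$ fixes $\mathcal{F}_n$ and that $\tau_n$ agrees with the normalized matrix trace on $\mathcal{F}_n^k$. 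If $l(\mu)\neq l(\nu)$, then $\phi_n(S_\mu S_\nu^*)=0$ and the task is to show $\|E_k(S_\mu S_\nu^*)\|\to 0$.

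This last estimate is the heart of the matter and the main obstacle. Writing $u=\sum_{\alpha\beta}u_{\alpha\beta}S_\alpha S_\beta^*$ with $l(\alpha)=l(\beta)=k$ and $[u_{\alpha\beta}]\in U(n^k)$, I would expand $uS_\mu S_\nu^* u^*$ using the Cuntz relations and integrate termwise via the elementary moment formula $\int_{U(N)} u_{\alpha\beta}\,\overline{u_{\gamma\delta}}\,du=N^{-1}\delta_{\alpha\gamma}\delta_{\beta\delta}$ with $N=n^k$. The Kronecker deltas collapse the double sum to $E_k(S_\mu S_\nu^*)=n^{-k}\sum_{\alpha}S_\alpha\Big(\sum_{\delta'} S_{\beta'}^*S_{\delta'}\Big)S_\alpha^*$, which vanishes unless the shorter word is a prefix of the longer one. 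Since the projections $S_\alpha S_\alpha^*$ ($l(\alpha)=k$) are pairwise orthogonal and each $S_\alpha$ is an isometry, the outer sum contributes no growth, so $\|E_k(S_\mu S_\nu^*)\|=n^{-k}\big\|\sum_{\delta'} S_{\beta'}^*S_{\delta'}\big\|$; the remaining inner sum has pairwise orthogonal initial spaces $S_{\delta'}H$, and a Cauchy--Schwarz estimate over the $n^{k-l(\nu)}$ summands bounds its norm by $n^{(k-l(\nu))/2}$. Hence $\|E_k(S_\mu S_\nu^*)\|=O(n^{-k/2})\to 0$. Combining the two cases yields $\|E_k(x)-\phi_n(x)1\|\to 0$ on $\mathcal{P}_n$, and the density argument then gives $\phi_n(x)1\in\overline{\mathrm{conv}\{uxu^* \mid u\in\mathcal{U}(\mathcal{F}_n)\}}^{\|.\|}$ for every $x\in\mathcal{O}_n$. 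I expect the bookkeeping of the prefix conditions and the exploitation of isometry orthogonality to beat the number of surviving terms to be the only genuinely delicate point; everything else is soft functional analysis.
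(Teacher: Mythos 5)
The paper does not prove this proposition at all: it is quoted verbatim from Archbold's 1980 paper and used as a black box, so there is no internal proof to compare against. Your strategy --- averaging $x$ over the unitary groups of the matrix subalgebras $\mathcal{F}_n^k \cong M_{n^k}$ and showing $\|E_k(x)-\phi_n(x)1\|\to 0$ --- is a legitimate and essentially self-contained route, close in spirit to Archbold's original argument (which first averages away the off-diagonal gauge components and then invokes the Dixmier property of the UHF algebra $\mathcal{F}_n$; your single Haar average does both steps at once). The soft parts are all fine: $E_k(x)$ lies in the closed convex hull because the integrand is norm-continuous on a compact group against a probability measure; the reduction to reduced words $S_\mu S_\nu^*$ by contractivity and density is correct; and the diagonal case $l(\mu)=l(\nu)$ follows exactly as you say from $\int_{U(N)}uyu^*\,du=\tau_N(y)1$ together with the compatibility of normalized traces under the unital inclusions $\mathcal{F}_n^p\subset\mathcal{F}_n^k$.

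There is, however, a genuine gap in the one step you yourself identify as the heart of the matter. Your moment computation correctly yields $E_k(S_\mu S_\nu^*)=n^{-k}\sum_{\alpha}S_\alpha T S_\alpha^*$ with $T=\sum_{l(\beta)=k}S_\beta^*S_\mu S_\nu^*S_\beta$, and the reduction $\|E_k(S_\mu S_\nu^*)\|=n^{-k}\|T\|$ is right. But the claimed bound $\|T\|\le n^{(k-l(\nu))/2}$ ``by Cauchy--Schwarz over the summands with pairwise orthogonal initial spaces'' does not hold up: the operators $S_{\gamma}^*S_\sigma S_{\gamma}$ do not have pairwise orthogonal initial spaces (each $S_\gamma^*$ is surjective), and the honest operator Cauchy--Schwarz inequality $\|\sum a_i^*b_i\|\le\|\sum a_i^*a_i\|^{1/2}\|\sum b_i^*b_i\|^{1/2}$ applied to the $N=n^{k-l(\mu)}$ surviving terms gives only $\|T\|\le N$, and $n^{-k}\cdot n^{k-l(\mu)}=n^{-l(\mu)}$ does \emph{not} tend to zero in $k$. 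So as written the decay is not established. The fix is that no estimate is needed at all: writing $\mu=\nu\sigma$ with $\sigma$ nonempty (otherwise $T=0$), the prefix conditions collapse the sum to $T=\sum_{l(\gamma)=k-l(\mu)}S_\gamma^*S_\sigma S_\gamma$, and the Cuntz relations give $\sum_i S_i^*S_\sigma S_i=S_{\sigma_2}\cdots S_{\sigma_r}S_{\sigma_1}$, i.e.\ one application of this averaging just cyclically permutes the word $\sigma$. Iterating, $T$ is a \emph{single} reduced word, so $\|T\|\le 1$ and $\|E_k(S_\mu S_\nu^*)\|\le n^{-k}\to 0$. With that replacement your proof is complete, and in fact with a much better rate than you claimed.
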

As a corollary, we obtain the following (well-known) fact (see also \cite{laatcuntzautomorphisms}).
\begin{corollary} \label{laatcorollary:relativecommutant}
The relative commutant of $\mathcal{F}_n$ in $\mathcal{O}_n$ is trivial, i.e.,
\begin{equation} \nonumber
  (\mathcal{F}_n)^{\prime} \cap \mathcal{O}_n = \mathbb{C}1.
\end{equation}
\end{corollary}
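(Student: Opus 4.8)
The plan is to deduce this directly from the Dixmier-type averaging result in Proposition \ref{laatproposition:cuntzdixmierproperty}, which already does all of the analytic work. One inclusion is immediate: since the identity $1$ commutes with every element of $\mathcal{F}_n$, we have $\mathbb{C}1 \subseteq (\mathcal{F}_n)^{\prime} \cap \mathcal{O}_n$. It therefore suffices to establish the reverse inclusion, and for this I would take an arbitrary element $x \in (\mathcal{F}_n)^{\prime} \cap \mathcal{O}_n$ and show that $x$ is a scalar multiple of $1$.

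The key observation is that $x$ commutes with every unitary of $\mathcal{F}_n$. Indeed, by definition $x$ commutes with all of $\mathcal{F}_n$, and in particular $ux = xu$ for every $u \in \mathcal{U}(\mathcal{F}_n)$, whence $uxu^* = x$. Consequently the set $\{uxu^* \,|\, u \in \mathcal{U}(\mathcal{F}_n)\}$ consists of the single point $x$, so its convex hull is $\{x\}$ and its norm-closure is again $\{x\}$:
\begin{equation} \nonumber
  \overline{\mathrm{conv}\{uxu^* \,|\, u \in \mathcal{U}(\mathcal{F}_n)\}}^{\|.\|} = \{x\}.
\end{equation}
Applying Proposition \ref{laatproposition:cuntzdixmierproperty} to this $x$, the left-hand side of Archbold's inclusion contains $\phi_n(x)1_{\mathcal{O}_n}$, while the right-hand side is exactly $\{x\}$. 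Hence $x = \phi_n(x)1_{\mathcal{O}_n} \in \mathbb{C}1$, which gives the desired inclusion $(\mathcal{F}_n)^{\prime} \cap \mathcal{O}_n \subseteq \mathbb{C}1$ and completes the argument.

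I do not expect a genuine obstacle here: the entire difficulty is concentrated in Archbold's result, which has been granted. The only minor point to be careful about is the passage from ``$x$ commutes with $\mathcal{F}_n$'' to ``$x$ commutes with every unitary of $\mathcal{F}_n$'', but this is trivial since the unitaries lie in $\mathcal{F}_n$; one does not even need the fact that a $C^{\ast}$-algebra is spanned by its unitaries. Thus the corollary follows formally from the averaging property, and the proof amounts to recording that the averaging set collapses to a single point precisely when $x$ lies in the relative commutant.
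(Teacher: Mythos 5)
Your proof is correct and follows essentially the same route as the paper: both arguments apply Archbold's averaging result (Proposition \ref{laatproposition:cuntzdixmierproperty}) and use the fact that $uxu^*=x$ for $x$ in the relative commutant to conclude $x=\phi_n(x)1_{\mathcal{O}_n}$. Your observation that the averaging set collapses to the singleton $\{x\}$ is just a slightly more compact way of phrasing the paper's $\varepsilon$-argument.
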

\begin{proof}
   Let $x \in (\mathcal{F}_n)^{\prime} \cap \mathcal{O}_n$. By Proposition \ref{laatproposition:cuntzdixmierproperty}, we know that for every $\laatepsilon > 0$, there exists a finite convex combination $\sum_{i=1}^m \lambda_i u_i x u_i^*$, where $u_i \in \mathcal{U}(\mathcal{F}_n)$, such that $\|\sum_{i=1}^m \lambda_i u_i x u_i^* - \phi_n(x)1_{\mathcal{O}_n}\| < \laatepsilon$. Since $x \in (\mathcal{F}_n)^{\prime} \cap \mathcal{O}_n$, we have $\sum_{i=1}^m \lambda_i u_i x u_i^* = \sum_{i=1}^m \lambda_i x u_i u_i^*=x$. Hence, $\|x - \phi_n(x)1_{\mathcal{O}_n}\| < \laatepsilon$. This implies that $x \in \mathbb{C}1$.
\end{proof}
Proposition \ref{laatproposition:cuntzdixmierproperty} can be extended to finite sets in $\mathcal{O}_n$, as described in the following lemma, by similar methods as in \cite{laatdixmiervonneumannalgebras}, Part III, Chapter 5. For an invertible element $v$ in a $C^{\ast}$-algebra $A$, we define $\mathrm{ad}(v)(x)=vxv^{-1}$ for all $x \in A$.
\begin{lemma} \label{laatlemma:cuntzdixmierpropertyfinitesets}
  Let $\{x_1,\ldots,x_k\}$ be a subset of $\mathcal{O}_n$, and let $\laatepsilon>0$. Then there exists a convex combination $\alpha$ of elements in $\{\mathrm{ad}(u) \mid u \in \mathcal{U}(\mathcal{F}_n)\}$ such that
\[
  \|\alpha(x_i)-\phi_n(x_i)1_{\mathcal{O}_n}\| < \laatepsilon \quad \textrm{for all } i=1,\ldots,k.
\]
  Moreover, there exists a net $\{\alpha_j\}_{j \in J} \subset \mathrm{conv}\{\mathrm{ad}(u) \mid u \in \mathcal{U}(\mathcal{F}_n)\}$ such that
\[
  \lim_{j} \|\alpha_j(x)-\phi_n(x)1_{\mathcal{O}_n}\|=0
\]
for all $x \in \mathcal{O}_n$.
\end{lemma}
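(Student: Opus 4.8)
The plan is to reduce the finite-set statement to the single-element Dixmier property of Proposition~\ref{laatproposition:cuntzdixmierproperty} by an iterative argument that treats the points $x_1,\dots,x_k$ one at a time, absorbing each newly produced convex combination into a running composite. Two elementary observations make this work. First, every $\mathrm{ad}(u)$ with $u\in\mathcal{U}(\mathcal{F}_n)$ is a unital $*$-automorphism, hence an isometry fixing the scalars $\mathbb{C}1$; consequently any convex combination $\alpha\in\mathrm{conv}\{\mathrm{ad}(u)\mid u\in\mathcal{U}(\mathcal{F}_n)\}$ is a unital linear contraction with $\alpha(\lambda 1)=\lambda 1$, and the composite of two such convex combinations is again one of the same form, since $\mathrm{ad}(u)\circ\mathrm{ad}(v)=\mathrm{ad}(uv)$ and $uv\in\mathcal{U}(\mathcal{F}_n)$. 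Second, $\phi_n$ is invariant under each $\mathrm{ad}(u)$, so $\phi_n(\alpha(x))=\phi_n(x)$ for all such $\alpha$ and all $x$.

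Granting these, I would argue inductively. Applying Proposition~\ref{laatproposition:cuntzdixmierproperty} to $x_1$ gives $\alpha_1\in\mathrm{conv}\{\mathrm{ad}(u)\}$ with $\|\alpha_1(x_1)-\phi_n(x_1)1\|<\laatepsilon$. Suppose $\alpha_i$ has been constructed with $\|\alpha_i(x_j)-\phi_n(x_j)1\|<\laatepsilon$ for $j\le i$. Applying Proposition~\ref{laatproposition:cuntzdixmierproperty} to the element $\alpha_i(x_{i+1})\in\mathcal{O}_n$ produces $\beta\in\mathrm{conv}\{\mathrm{ad}(u)\}$ with $\|\beta(\alpha_i(x_{i+1}))-\phi_n(\alpha_i(x_{i+1}))1\|<\laatepsilon$; set $\alpha_{i+1}=\beta\circ\alpha_i$, which is again a convex combination of the required form. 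Since $\phi_n(\alpha_i(x_{i+1}))=\phi_n(x_{i+1})$, the new element $x_{i+1}$ now satisfies the estimate. For the previously handled $x_j$ ($j\le i$), I use that $\beta$ is a unital contraction fixing $\phi_n(x_j)1$, so
\[
  \|\alpha_{i+1}(x_j)-\phi_n(x_j)1\|=\|\beta(\alpha_i(x_j))-\beta(\phi_n(x_j)1)\|\le\|\alpha_i(x_j)-\phi_n(x_j)1\|<\laatepsilon,
\]
and all earlier estimates are preserved. After $k$ steps, $\alpha:=\alpha_k$ is the desired convex combination.

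The one ingredient that is genuinely specific to this setting, and the step I expect to need the most care, is the invariance $\phi_n\circ\mathrm{ad}(u)=\phi_n$ for $u\in\mathcal{U}(\mathcal{F}_n)$, on which the whole preservation mechanism rests. I would deduce it from $\phi_n=\tau_n\circ F_{n,0}$: the map $F_{n,0}$ is a conditional expectation onto $\mathcal{F}_n$ and hence $\mathcal{F}_n$-bimodular, so $F_{n,0}(uxu^*)=uF_{n,0}(x)u^*$ for $u\in\mathcal{U}(\mathcal{F}_n)$, while $\tau_n$ is a trace on $\mathcal{F}_n$, whence $\tau_n(uF_{n,0}(x)u^*)=\tau_n(F_{n,0}(x))$. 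Combining the two gives $\phi_n(uxu^*)=\phi_n(x)$; alternatively, this follows from the KMS property of $\phi_n$ together with $u\in\mathcal{F}_n$.

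Finally, for the ``moreover'' statement I would manufacture the net from the first part by a standard indexing. Take $J$ to be the set of pairs $(\mathcal{S},\laatepsilon)$ with $\mathcal{S}\subset\mathcal{O}_n$ finite and $\laatepsilon>0$, directed by declaring $(\mathcal{S},\laatepsilon)\le(\mathcal{S}',\laatepsilon')$ when $\mathcal{S}\subseteq\mathcal{S}'$ and $\laatepsilon\ge\laatepsilon'$, and for each $j=(\mathcal{S},\laatepsilon)$ let $\alpha_j$ be a convex combination furnished by the first part with $\|\alpha_j(x)-\phi_n(x)1\|<\laatepsilon$ for every $x\in\mathcal{S}$. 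Then for fixed $x\in\mathcal{O}_n$ and $\delta>0$, every $j\ge(\{x\},\delta)$ satisfies $\|\alpha_j(x)-\phi_n(x)1\|<\delta$, which yields the claimed pointwise convergence.
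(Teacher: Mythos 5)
Your proposal is correct and follows essentially the same route as the paper: an inductive composition $\alpha_{i+1}=\beta\circ\alpha_i$ using the single-element Dixmier property, the contractivity and unitality of convex combinations of $\mathrm{ad}(u)$, the invariance $\phi_n\circ\mathrm{ad}(u)=\phi_n$, and the identical directed set of pairs (finite subset, tolerance) for the net. You supply slightly more detail than the paper does (notably the justification of $\phi_n$-invariance via the bimodularity of $F_{n,0}$ and the trace property of $\tau_n$), but the argument is the same.
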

\begin{proof}
Suppose that $\|\alpha^{\prime}(x_i)-\phi_n(x_i)1_{\mathcal{O}_n}\| < \laatepsilon$ for $i=1,\ldots,k-1$. By Proposition \ref{laatproposition:cuntzdixmierproperty}, we can find a convex combination $\tilde{\alpha}$ such that
\[
  \|\tilde{\alpha}(\alpha^{\prime}(x_k))-\phi_n(\alpha^{\prime}(x_k))1_{\mathcal{O}_n}\| < \laatepsilon.
\]
Note that $\phi_n(\alpha^{\prime}(x_k))=\phi_n(x_k)$ and $1_{\mathcal{O}_n}=\tilde{\alpha}(1_{\mathcal{O}_n})$. By the fact that $\|\tilde{\alpha}(x)\| \leq \|x\|$ for all $x \in \mathcal{O}_n$, we conclude that $\alpha = \tilde{\alpha} \circ \alpha^{\prime}$ satisfies $\|\alpha(x_i)-\phi_n(x_i)1_{\mathcal{O}_n}\| < \laatepsilon$ for $i=1,\ldots,k$.

Let $J$ denote the directed set consisting of pairs $(F,\eta)$, where $F$ is a finite subset of $\mathcal{O}_n$ and $\eta \in (0,1)$, with the ordering given by $(F_1,\eta_1) \preceq (F_2,\eta_2)$ if $F_1 \subset F_2$ and $\eta_1 \geq \eta_2$. By the first assertion, this gives rise to a net $\{\alpha_j\}_{j \in J}$ with the desired properties.
\end{proof}

\section{Proof of the JCB Grothendieck Theorem} \label{laatsec:jcbgt}
In this section, we explain the proof of the Grothendieck Theorem for jointly completely bounded bilinear forms on $C^{\ast}$-algebras. As mentioned in Section \ref{laatsec:introduction}, the proof is along the same lines as the proof by Haagerup and Musat, but we tensor with Cuntz algebras instead of type $\mathrm{III}$ factors.

Applying the GNS construction to the pair $(\mathcal{O}_n,\phi_n)$, we obtain a $\ast$-representation $\pi_n$ of $\mathcal{O}_n$ on the Hilbert space $H_{\pi_n}=L^2(\mathcal{O}_n,\phi_n)$, with cyclic vector $\xi_n$, such that $\phi_n(x)=\langle \pi_n(x)\xi_n,\xi_n \rangle_{H_{\pi_n}}$. We identify $\mathcal{O}_n$ with its GNS representation. Note that $\phi_n$ extends in a normal way to the von Neumann algebra $\mathcal{O}_n^{\prime\prime}$, which also acts on $H_{\pi_n}$. This normal extension is a KMS state for a $W^{\ast}$-dynamical system with $\mathcal{O}_n^{\prime\prime}$ as the underlying von Neumann algebra (see Corollary 5.3.4 of \cite{laatbrattelirobinson2}). The commutant $\mathcal{O}_n^{\prime}$ of $\mathcal{O}_n$ is also a von Neumann algebra, and using Tomita-Takesaki theory \index{Tomita-Takesaki theory} (see \cite{laatbrattelirobinson2}, \cite{laattakesakitomita}), we obtain, via the polar decomposition of the closure of the operator $Sx\xi_n=x^*\xi_n$, a conjugate-linear involution $J:H_{\pi_n} \rightarrow H_{\pi_n}$ satisfying $J \mathcal{O}_n J \subset \mathcal{O}_n^{\prime}$.
\begin{lemma} For $k \in \mathbb{Z}$, we have
\[
  \mathcal{O}_n^k := \{x \in \mathcal{O}_n \mid \rho^n_t(x)=n^{-ikt}x \forall t \in \mathbb{R}\} = \{x \in \mathcal{O}_n \mid \phi_n(xy)=n^{-k}\phi_n(yx) \forall y \in \mathcal{O}_n\}.
\]
\end{lemma}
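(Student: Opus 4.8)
Write $\mathcal{O}_n^k$ for the spectral set on the left-hand side and $R_k$ for the KMS-type set on the right-hand side. I would establish the two inclusions separately; $\mathcal{O}_n^k \subseteq R_k$ is immediate from the KMS condition, while $R_k \subseteq \mathcal{O}_n^k$ carries the real content.

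For $\mathcal{O}_n^k\subseteq R_k$: if $\rho^n_t(x)=n^{-ikt}x$ for all $t\in\mathbb{R}$, then $t\mapsto\rho^n_t(x)$ extends to the entire function $z\mapsto n^{-ikz}x$, so $x\in(\mathcal{O}_n)^a$ and $\rho^n_i(x)=n^{k}x$. Applying Definition \ref{laatdefinition:kmsstate} with $a=x$, $b=y$ and $t=0$ gives $\phi_n(\rho^n_i(x)y)=\phi_n(yx)$, i.e. $n^k\phi_n(xy)=\phi_n(yx)$, which is exactly the defining relation of $R_k$.

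For the reverse inclusion I would use that $\rho^n$ is periodic of period $T_0=2\pi/\log n$ (because $n^{iT_0}=1$), hence is really a circle action, and introduce the Fourier projections
\[ E_m(x)=\frac{1}{T_0}\int_0^{T_0} n^{imt}\rho^n_t(x)\,dt, \]
which converge in norm by norm-continuity of $\rho^n$, satisfy $E_m(x)\in\mathcal{O}_n^m$, and reconstruct $x$ via the Fejér means of $\sum_m E_m(x)$. The crucial point is that $R_k$ is a norm-closed, $\rho^n$-invariant subspace: from $\phi_n\circ\rho^n_t=\phi_n$ one computes $\phi_n(\rho^n_t(x)y)=\phi_n(x\rho^n_{-t}(y))=n^{-k}\phi_n(\rho^n_{-t}(y)x)=n^{-k}\phi_n(y\rho^n_t(x))$, so $\rho^n_t(x)\in R_k$ whenever $x\in R_k$, and therefore each $E_m(x)\in R_k$. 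But $E_m(x)\in\mathcal{O}_n^m\subseteq R_m$ by the first inclusion, so $E_m(x)$ satisfies both $\phi_n(E_m(x)y)=n^{-k}\phi_n(yE_m(x))$ and $\phi_n(E_m(x)y)=n^{-m}\phi_n(yE_m(x))$. Taking $y=E_m(x)^*$ yields $(n^{-k}-n^{-m})\phi_n(E_m(x)^*E_m(x))=0$; since $n\geq 2$ the map $m\mapsto n^{-m}$ is injective, so for $m\neq k$ faithfulness of $\phi_n$ forces $E_m(x)=0$. Hence $x=E_k(x)\in\mathcal{O}_n^k$.

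The main obstacle is the pair of structural inputs behind the reverse inclusion: faithfulness of $\phi_n$, which I would deduce from $\phi_n=\tau_n\circ F_{n,0}$ together with faithfulness of the trace $\tau_n$ and of the conditional expectation $F_{n,0}$; and the reconstruction $x=\sum_m E_m(x)$, for which I would invoke Fejér summation for the circle action $\rho^n$, so that vanishing of all components with $m\neq k$ genuinely forces $x$ into the single spectral subspace $\mathcal{O}_n^k$. The remaining manipulations are routine once these two facts are in place.
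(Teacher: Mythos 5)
Your proof is correct, and it is essentially the argument the paper has in mind: the paper gives no proof at all, only the remark that the statement is ``analogous to Lemma 1.6 of \cite{laattakesakistructure}'', and Takesaki's proof of that lemma is exactly your combination of the KMS condition (for the easy inclusion) with the Fourier/Fej\'er decomposition of a norm-continuous periodic action and faithfulness of the state (for the reverse inclusion). The one caveat is not a gap in your argument but an inherited one: the paper's sign conventions for the KMS condition and for the exponent in $\rho^n_t(x)=n^{-ikt}x$ do not match up when tested on the generators (e.g.\ $S_j$ lies in $\mathcal{O}_n^{-1}$ on the left but satisfies $\phi_n(S_jy)=n^{-1}\phi_n(yS_j)$, placing it in the $k=1$ set on the right), so in a final write-up you should fix one consistent convention --- your formal derivation of $\mathcal{O}_n^k\subseteq R_k$ from Definition \ref{laatdefinition:kmsstate} as literally stated is fine, and the rest of your argument is convention-independent.
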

The proof of this lemma is analogous to Lemma 1.6 of \cite{laattakesakistructure}. Note that $\mathcal{O}_n^0=\mathcal{F}_n$, and that for all $k \in \mathbb{Z}$, we have $\mathcal{O}_n^k \neq \{0\}$. 
\begin{lemma} \label{laatlemma:ck}
  For every $k \in \mathbb{Z}$, there exists a $c_k \in \mathcal{O}_n$ such that
\[
  \phi_n(c_k^*c_k)=n^{\frac{k}{2}}, \qquad \phi_n(c_kc_k^*)=n^{-\frac{k}{2}},
\]
and, moreover, $\langle c_kJc_kJ\xi_n,\xi_n \rangle=1$.
\end{lemma}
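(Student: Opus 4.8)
The plan is to realise $c_k$ as a suitably normalised \emph{homogeneous} element of degree $-k$ for the grading $\{\mathcal{O}_n^k\}_{k\in\mathbb{Z}}$, and then to deduce the third identity from Tomita--Takesaki theory essentially for free. Concretely, for $k\geq 0$ I would take $c_k=n^{k/4}S_1^k$ and for $k<0$ I would take $c_k=n^{-k/4}(S_1^*)^{-k}$ (so that $c_{-k}=c_k^*$ and $c_0=1$). Since $\rho^n_t(S_1)=n^{it}S_1$, the word $S_1^k$ lies in $\mathcal{O}_n^{-k}$, and likewise $(S_1^*)^{-k}\in\mathcal{O}_n^{-k}$ when $k<0$; thus $c_k\in\mathcal{O}_n^{-k}$ in all cases, and it is a polynomial in the generators, hence lies in $\mathcal{P}_n\subset\mathcal{O}_n$.

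First I would check the two state identities by a direct computation with $\phi_n=\tau_n\circ F_{n,0}$. Using $S_1^*S_1=1$ together with the fact that $S_1^k(S_1^*)^k=S_\mu S_\mu^*$ is a minimal projection in $\mathcal{F}_n^k\cong M_{n^k}(\mathbb{C})$, on which the trace $\tau_n$ takes the value $n^{-k}$, one gets for $k\geq 0$ that $\phi_n(c_k^*c_k)=n^{k/2}\phi_n((S_1^*)^kS_1^k)=n^{k/2}$ and $\phi_n(c_kc_k^*)=n^{k/2}\phi_n(S_1^k(S_1^*)^k)=n^{k/2}\cdot n^{-k}=n^{-k/2}$; the case $k<0$ is symmetric. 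These two values are not independent: the KMS characterisation in the preceding lemma gives $\phi_n(c_kc_k^*)=n^{-k}\phi_n(c_k^*c_k)$ for any $c_k\in\mathcal{O}_n^{-k}$, so imposing only the normalisation $\phi_n(c_k^*c_k)=n^{k/2}$ already forces $\phi_n(c_kc_k^*)=n^{-k/2}$.

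For the third identity I would pass to the GNS picture and use two standard facts about the modular theory of $(\mathcal{O}_n^{\prime\prime},\phi_n)$: that the modular conjugation fixes the cyclic vector, $J\xi_n=\xi_n$, and that the spectral subspaces of the grading diagonalise the modular operator. Concretely, since $S=J\Delta^{1/2}$ is the closure of $x\xi_n\mapsto x^*\xi_n$, comparing $\|Sx\xi_n\|^2=\phi_n(xx^*)$ with $\|\Delta^{1/2}x\xi_n\|^2$ and using $\phi_n(xx^*)=n^{k}\phi_n(x^*x)$ for $x\in\mathcal{O}_n^k$ shows that $\Delta^{1/2}x\xi_n=n^{k/2}x\xi_n$ on the graded pieces. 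Rewriting $Jx\xi_n=\Delta^{1/2}x^*\xi_n$ and applying this to $x=c_k\in\mathcal{O}_n^{-k}$ (so that $c_k^*\in\mathcal{O}_n^{k}$) yields
\[
  J c_k J \xi_n = J c_k \xi_n = \Delta^{1/2} c_k^* \xi_n = n^{k/2} c_k^* \xi_n,
\]
whence $\langle c_k J c_k J \xi_n,\xi_n\rangle=n^{k/2}\phi_n(c_kc_k^*)=n^{k/2}\cdot n^{-k/2}=1$. In effect, once $c_k$ is chosen homogeneous of degree $-k$, the third identity is \emph{equivalent} to the second.

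The main obstacle is this modular-theoretic step, and in particular getting the direction of the modular operator right: one must be careful with the sign conventions relating $\rho^n$ to the modular automorphism group (the text already flags that $\phi_n$ is a KMS state for $\rho^n_{-t}$), since a sign error would replace $n^{k/2}$ by $n^{-k/2}$ and destroy the identity. I would therefore pin down the eigenvalue $\Delta^{1/2}x\xi_n=n^{k/2}x\xi_n$ unambiguously via the norm computation above, testing it against the explicit element $x=S_1^*\in\mathcal{O}_n^1$, where $\phi_n(xx^*)=\phi_n(S_1^*S_1)=1$ and $\phi_n(x^*x)=\phi_n(S_1S_1^*)=n^{-1}$ fix the ratio. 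Everything else reduces to the routine matrix-unit computation in the UHF-algebra $\mathcal{F}_n$ already used for the first two identities.
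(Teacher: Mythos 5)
Your proof is correct and follows essentially the route the paper intends: the paper omits the argument, deferring to Lemma 2.1 of Haagerup--Musat, whose proof is exactly this modular-theoretic computation with a homogeneous element of a spectral subspace, so your version simply makes it concrete in the Cuntz-algebra setting. The explicit choices $c_k=n^{k/4}S_1^k$ (resp.\ $n^{-k/4}(S_1^*)^{-k}$), the trace computation in $\mathcal{F}_n^k$, and the eigenvalue identity $\Delta^{1/2}x\xi_n=n^{k/2}x\xi_n$ for $x\in\mathcal{O}_n^k$ all check out, and your care with the sign conventions is warranted since the two characterizations of $\mathcal{O}_n^k$ as printed in the paper are not mutually consistent in sign --- your direct verification on $S_1$ resolves this correctly.
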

The proof is similar to the proof of Lemma 2.1 of \cite{laathmeffrosruan}.
\begin{proposition} \label{laatproposition:tensor}
Let $A,B$ be $C^{\ast}$-algebras, and let $u:A \times B \rightarrow \mathbb{C}$ be a jointly completely bounded bilinear form. There exists a bounded bilinear form $\hat{u}$ on $(A \otimes_{\min} \mathcal{O}_n) \times (B \otimes_{\min} J\mathcal{O}_nJ)$ given by
\begin{equation} \nonumber
  \hat{u}(a \otimes c,b \otimes d)=u(a,b)\langle cd\xi_n,\xi_n \rangle
\end{equation}
for all $a \in A$, $b \in B$, $c \in \mathcal{O}_n$ and $d \in J\mathcal{O}_nJ$. Moreover, $\|\hat{u}\|\leq\|u\|_{jcb}$.
\end{proposition}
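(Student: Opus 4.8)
The plan is to establish that $\hat{u}$ is well-defined and bounded by interpreting it as a bilinear form arising from the jointly completely bounded structure of $u$, exploiting the fact that tensoring with $\mathcal{O}_n$ and $J\mathcal{O}_nJ$ effectively introduces auxiliary matrix coefficients against which $\|u\|_{jcb}$ is computed. The key observation is that the pairing $\langle cd\xi_n, \xi_n\rangle$ for $c \in \mathcal{O}_n$ and $d \in J\mathcal{O}_nJ$ behaves like a state evaluation, and since $J\mathcal{O}_nJ \subset \mathcal{O}_n'$ commutes with $\mathcal{O}_n$, the vector $\xi_n$ supplies a bounded sesquilinear structure that one can control.

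\emph{First I would} verify that $\hat{u}$ is well-defined on the algebraic tensor products, which amounts to checking that the expression is bilinear and independent of the representation of elementary tensors; this follows from the bilinearity of $u$ and the linearity of the inner product, together with the minimal tensor norm being a cross norm. The substantive step is the norm estimate. \emph{Here} I would take finite sums $\sum_i a_i \otimes c_i \in A \otimes_{\min} \mathcal{O}_n$ and $\sum_j b_j \otimes d_j \in B \otimes_{\min} J\mathcal{O}_nJ$ and expand
\[
  \hat{u}\Bigl(\sum_i a_i \otimes c_i, \sum_j b_j \otimes d_j\Bigr) = \sum_{i,j} u(a_i, b_j)\langle c_i d_j \xi_n, \xi_n\rangle.
\]
The idea is to recognise the matrix $[\langle c_i d_j \xi_n, \xi_n\rangle]$ as encoding inner products of vectors $c_i^* \xi_n$ against $d_j \xi_n$ (using that $d_j \in \mathcal{O}_n'$ commutes past), and then to relate $\sum_{i,j} u(a_i,b_j) \langle \cdot, \cdot\rangle$ to the definition of $\|u\|_{jcb}$ via the maps $u_n$ from Section \ref{laatsec:bilforms}. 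Concretely, one chooses matrix units so that the Hilbert space vectors $c_i \xi_n$ and $d_j^* \xi_n$ give rise to matrices $c, d \in M_N(\mathbb{C})$ whose operator norms are bounded by $\|\sum_i a_i \otimes c_i\|_{\min}$ and $\|\sum_j b_j \otimes d_j\|_{\min}$ respectively, after which the bound $\|u_N\| \leq \|u\|_{jcb}$ yields the desired inequality.

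\emph{The hard part will be} making the passage from the abstract operators $c_i \in \mathcal{O}_n$ and $d_j \in J\mathcal{O}_nJ$ to finite-dimensional matrices of controlled norm without losing the correct scaling, so that the factor $\langle c_i d_j \xi_n, \xi_n\rangle$ is absorbed cleanly into the $M_n(\mathbb{C}) \otimes M_n(\mathbb{C})$ pairing defining $u_n$. This requires carefully compressing to a finite-dimensional subspace of $H_{\pi_n}$ containing the relevant vectors and checking that the compression is norm-nonincreasing on the minimal tensor product; the commutation $J\mathcal{O}_nJ \subset \mathcal{O}_n'$ is exactly what guarantees that $c_i$ and $d_j$ can be treated as acting on independent tensor legs, which is the structural heart of why $\|\hat{u\|} \leq \|u\|_{jcb}$ holds with no loss. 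Once the finite-dimensional reduction is in place, density of the algebraic tensor product in the minimal tensor product extends $\hat{u}$ to a bounded bilinear form on all of $(A \otimes_{\min} \mathcal{O}_n) \times (B \otimes_{\min} J\mathcal{O}_nJ)$ with the stated norm bound.
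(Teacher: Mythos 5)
Your general strategy (expand on elementary tensors, rewrite $\langle c_i d_j\xi_n,\xi_n\rangle$ as an inner product of vectors, and feed the result into the $u_N$ characterization of $\|u\|_{jcb}$) is in the right spirit, but the step you yourself flag as ``the hard part'' is exactly where the argument has a genuine gap, and the ingredient needed to close it never appears in your proposal. The point is that $u_N$ pairs against $M_N(\mathbb{C})\otimes M_N(\mathbb{C})$, i.e.\ against operators living on \emph{separate} tensor legs, whereas your $c_i$ and $d_j$ are commuting operators on the \emph{same} Hilbert space $L^2(\mathcal{O}_n,\phi_n)$. Compressing to a finite-dimensional subspace $P H_{\pi_n}$ does give $\|\sum_i a_i\otimes Pc_iP\|_{\min}\leq\|\sum_i a_i\otimes c_i\|_{\min}$ (compression is u.c.p.), but it destroys the commutation between the $c_i$'s and $d_j$'s, and the products $(Pc_iP)(Pd_jP)$ are not of the form (operator on leg one)$\,\otimes\,$(operator on leg two), so they cannot be inserted into $u_N$. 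The passage from ``commuting subalgebras of $\mathcal{B}(H)$'' to ``independent tensor legs with the same norm'' is precisely the assertion that the product $*$-homomorphism $\mathcal{O}_n\odot J\mathcal{O}_nJ\rightarrow\mathcal{B}(L^2(\mathcal{O}_n,\phi_n))$, $c\otimes d\mapsto cd$, is isometric for the \emph{minimal} tensor norm. For a general pair of commuting $C^{\ast}$-subalgebras this fails: the vector functional $c\otimes d\mapsto\langle cd\xi_n,\xi_n\rangle$ is a priori only a state on the \emph{maximal} tensor product, and need not be continuous for the minimal one. It is not a consequence of the mere inclusion $J\mathcal{O}_nJ\subset\mathcal{O}_n'$, as you suggest.

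This is exactly the point the paper isolates: the identity $\|\sum_i c_id_i\|_{\mathcal{B}(L^2(\mathcal{O}_n,\phi_n))}=\|\sum_i c_i\otimes d_i\|_{\mathcal{O}_n\otimes_{\min}J\mathcal{O}_nJ}$ holds because $\mathcal{O}_n$ is \emph{nuclear} (so the representation of the maximal tensor product factors through the minimal one) and \emph{simple} (so the minimal tensor product of the two simple nuclear algebras is simple, whence the resulting nonzero representation is injective, hence isometric). With that identity in hand, the rest of the argument is the one of Haagerup--Musat (Proposition 2.3 of their paper), where the corresponding role is played by the semidiscreteness of the type $\mathrm{III}$ factor. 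Your write-up would become correct if you inserted this nuclearity-plus-simplicity step explicitly and then paired the resulting state on $\mathcal{O}_n\otimes_{\min}J\mathcal{O}_nJ$ against $u$; as it stands, the finite-dimensional compression cannot substitute for it.
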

The $C^{\ast}$-algebra $J\mathcal{O}_nJ$ is just a copy of $\mathcal{O}_n$. This result is analogous to Proposition 2.3 of \cite{laathmeffrosruan}, and the proof is the same. Note that in our case, we use $\|\sum_{i=1}^k c_id_i\|_{\mathcal{B}(L^2(\mathcal{O}_n,\phi_n))}=\|\sum_{i=1}^k c_i \otimes d_i\|_{\mathcal{O}_n \otimes_{min} J\mathcal{O}_nJ}$ for all $c_1,\ldots,c_k \in \mathcal{O}_n$ and $d_1,\ldots,d_k \in J\mathcal{O}_nJ$. This equality is elementary, since $\mathcal{O}_n$ is simple and nuclear. In the proof of Haagerup and Musat, one takes the tensor product of $A$ and a certain type $\mathrm{III}$ factor $M$ and the tensor product of $B$ with the commutant $M^{\prime}$ of $M$, respectively. Note that $J\mathcal{O}_nJ \subset \mathcal{O}_n^{\prime}$.

One can formulate analogues of Lemma 2.4, Lemma 2.5 and Proposition 2.6 of \cite{laathmeffrosruan}. They can be proved in the same way as there, and one explicitly needs the existence and properties of KMS states on the Cuntz algebras (see Section \ref{laatsec:cuntzalgebras}). The analogue of Proposition 2.6 gives the ``transformation'' of the JCB Grothendieck Theorem to the noncommutative Grothendieck Theorem for bounded bilinear forms.

Using Lemma 2.7 of \cite{laathmeffrosruan}, we arrive at the following conclusion, which is the analogue of \cite{laathmeffrosruan}, Proposition 2.8.
\begin{proposition}
Let $K(n)=\sqrt{(n^{\frac{1}{2}}+n^{-\frac{1}{2}}) \slash 2}$, and let $u:A \times B \rightarrow \mathbb{C}$ be a jointly completely bounded bilinear form on $C^{\ast}$-algebras $A,B$. Then there exist states $f^n_1,f^n_2$ on $A$ and $g^n_1,g^n_2$ on $B$ such that for all $a \in A$ and $b \in B$,
\[
  |u(a,b)| \leq K(n)\|u\|_{jcb}\left(f^n_1(aa^*)^{\frac{1}{2}}g^n_1(b^*b)^{\frac{1}{2}} + f^n_2(a^*a)^{\frac{1}{2}}g^n_2(bb^*)^{\frac{1}{2}}\right).
\]
\end{proposition}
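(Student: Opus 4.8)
The plan is to reduce the assertion for $u$ to the classical noncommutative Grothendieck Theorem applied to the auxiliary form $\hat u$ on the tensored algebras, and then to undo the tensoring by averaging over $\mathcal{U}(\mathcal{F}_n)$. First I would invoke Proposition \ref{laatproposition:tensor} to obtain $\hat u$ on $(A\otimes_{\min}\mathcal{O}_n)\times(B\otimes_{\min}J\mathcal{O}_nJ)$ with $\|\hat u\|\le\|u\|_{jcb}$, and then apply Haagerup's noncommutative Grothendieck Theorem, which holds with constant $1$, to the bounded bilinear form $\hat u$. This produces states $F_1,F_2$ on $A\otimes_{\min}\mathcal{O}_n$ and $G_1,G_2$ on $B\otimes_{\min}J\mathcal{O}_nJ$ such that
\[
  |\hat u(x,y)|\le\|u\|_{jcb}\bigl(F_1(xx^*)+F_2(x^*x)\bigr)^{\frac12}\bigl(G_1(yy^*)+G_2(y^*y)\bigr)^{\frac12}
\]
for all $x,y$. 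The essential point is that this is a \emph{product of sums}, whereas the desired conclusion is a \emph{sum of products}; the Cuntz algebra and its KMS state are exactly what allow one to pass between the two.

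Next, for each $k\in\mathbb{Z}$ I would substitute $x=a\otimes c_k$ and $y=b\otimes Jc_kJ$, where $c_k$ is the element of Lemma \ref{laatlemma:ck}. Since $\langle c_kJc_kJ\xi_n,\xi_n\rangle=1$, the left-hand side equals $u(a,b)$. Using $xx^*=aa^*\otimes c_kc_k^*$, $x^*x=a^*a\otimes c_k^*c_k$ and the analogous expressions for $y$, and noting that $c_kc_k^*,c_k^*c_k\in\mathcal{F}_n$, I would average over $\{\mathrm{ad}(w)\mid w\in\mathcal{U}(\mathcal{F}_n)\}$ by means of Lemma \ref{laatlemma:cuntzdixmierpropertyfinitesets}: conjugating $c_k$ by $w\in\mathcal{U}(\mathcal{F}_n)$ keeps the pairing equal to $1$ (so the left-hand side is unchanged), while the net $\{\alpha_j\}$ sends $c_kc_k^*\mapsto\phi_n(c_kc_k^*)1=n^{-k/2}1$ and $c_k^*c_k\mapsto\phi_n(c_k^*c_k)1=n^{k/2}1$. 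Applying the Cauchy--Schwarz inequality to the resulting convex combinations and passing to the limit, and setting $f_i(\cdot)=F_i(\cdot\otimes 1)$ and $g_i(\cdot)=G_i(\cdot\otimes 1)$, I obtain $k$-independent states $f_1,f_2$ on $A$ and $g_1,g_2$ on $B$ together with
\[
  |u(a,b)|\le\|u\|_{jcb}\bigl(n^{-k/2}f_1(aa^*)+n^{k/2}f_2(a^*a)\bigr)^{\frac12}\bigl(n^{-k/2}g_1(bb^*)+n^{k/2}g_2(b^*b)\bigr)^{\frac12}.
\]
This is the transformation effected by the analogues of Lemmas~2.4 and~2.5 and Proposition~2.6 of \cite{laathmeffrosruan}; it is here that the triviality of the relative commutant (Corollary \ref{laatcorollary:relativecommutant}) underlies the factorisation of the averaged states through $\phi_n$.

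Finally I would optimise over $k$, which is the step corresponding to Lemma~2.7 of \cite{laathmeffrosruan}. Writing $P=f_1(aa^*)$, $Q=f_2(a^*a)$, $R=g_1(bb^*)$, $T=g_2(b^*b)$ and $s=n^{k}$, the square of the right-hand side is $\|u\|_{jcb}^2\bigl(s^{-1}PR+sQT+PT+QR\bigr)$, and its two \emph{balanced} cross terms $PT$ and $QR$ already carry the orderings demanded by the theorem (pairing $aa^*$ with $b^*b$, and $a^*a$ with $bb^*$). Over a continuous parameter $s>0$ the expression $s^{-1}PR+sQT$ would attain its minimum $2\sqrt{PQRT}$, collapsing the bound to $\|u\|_{jcb}(\sqrt{PT}+\sqrt{QR})$ with constant $1$; but $s$ is confined to the lattice $n^{\mathbb{Z}}$, so I can only choose the integer $k$ that best brackets the continuous minimiser. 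Writing $n^{k}=s^{*}\rho$ with $\rho\in[n^{-1/2},n^{1/2}]$ and $s^{*}=\sqrt{PR/QT}$ gives $s^{-1}PR+sQT=(\rho+\rho^{-1})\sqrt{PQRT}$, whose worst case (the geometric midpoint) yields $\min_{k}\bigl(n^{-k}PR+n^{k}QT\bigr)\le(n^{1/2}+n^{-1/2})\sqrt{PQRT}$. Since moreover $K(n)\ge 1$ allows $PT+QR\le K(n)^{2}(PT+QR)$, adding the two estimates shows that the square of the right-hand side is at most $K(n)^{2}\|u\|_{jcb}^{2}(\sqrt{PT}+\sqrt{QR})^{2}$, that is, $|u(a,b)|\le K(n)\|u\|_{jcb}(\sqrt{PT}+\sqrt{QR})$, which after relabelling $g_1,g_2$ is the asserted inequality. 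I expect the main obstacle to be precisely this discrete optimisation: because $(\mathcal{O}_n,\phi_n)$ realises a type $\mathrm{III}_{1/n}$ situation rather than type $\mathrm{III}_1$, the modular weights are restricted to $n^{\mathbb{Z}/2}$ and cannot balance $PR$ against $QT$ exactly, which is what forces $K(n)=\sqrt{(n^{1/2}+n^{-1/2})/2}$ to exceed $1$; the passage to $K=1$ is then deferred to Section \ref{laatsec:bestconstant}.
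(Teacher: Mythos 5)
Your proposal is correct and follows essentially the same route as the paper, which itself defers to the analogues of Lemmas 2.4--2.5, Proposition 2.6 and Lemma 2.7 of \cite{laathmeffrosruan}: tensor with $(\mathcal{O}_n,\phi_n)$ via Proposition \ref{laatproposition:tensor}, apply the noncommutative Grothendieck Theorem to $\hat{u}$, average over $\mathcal{U}(\mathcal{F}_n)$ using Lemma \ref{laatlemma:cuntzdixmierpropertyfinitesets} to replace the states by $f_i\otimes\phi_n$ and $g_i\otimes\phi_n$, insert the elements $c_k$ of Lemma \ref{laatlemma:ck}, and optimise over the discrete lattice $n^{\mathbb{Z}}$ to obtain $K(n)$. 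Your identification of the discrete modular spectrum as the source of $K(n)>1$ is also exactly the point addressed in Section \ref{laatsec:bestconstant}.
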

The above proposition is the JCB Grothendieck Theorem. However, the (universal) constant and states depend on $n$. This is because the noncommutative Grothendieck Theorem gives states on $A \otimes_{min} \mathcal{O}_n$ and $B \otimes_{min} J\mathcal{O}_nJ$, which clearly depend on $n$, and these states are used to obtain the states on $A$ and $B$. The best constant we obtain in this way comes from the case $n=2$, which yields the constant $K(2)=\sqrt{(2^{\frac{1}{2}}+2^{-\frac{1}{2}})/2} \sim 1.03$.

\section{The best constant} \label{laatsec:bestconstant}
In order to get the best constant $K=1$, we consider the $C^{\ast}$-dynamical system $(A,\mathbb{R},\rho)$, with $A = \mathcal{O}_2 \otimes \mathcal{O}_3$ and $\rho_t=\rho^2_t \otimes \rho^3_t$. It is straightforward to check that it has a KMS state, namely $\phi=\phi_2 \otimes \phi_3$. It is easy to see that $\mathcal{F}=\mathcal{F}_2 \otimes \mathcal{F}_3$ is contained in the fixed point algebra. (Actually, it is equal to the fixed point algebra, but we do not need this.) These assertions follow by the fact that the algebraic tensor product of $\mathcal{O}_2$ and $\mathcal{O}_3$ is dense in $\mathcal{O}_2 \otimes \mathcal{O}_3$. Note that $\rho$ is not periodic.

Applying the GNS construction to the pair $(A,\phi)$, we obtain a $\ast$-representation $\pi$ of $A$ on the Hilbert space $H_{\pi}=L^2(A,\phi)$, with cyclic vector $\xi$, such that $\phi(x)=\langle \pi(x)\xi,\xi \rangle_{H_{\pi}}$. We identify $A$ with its GNS representation. Using Tomita-Takesaki theory, we obtain a conjugate-linear involution $J:H_{\pi} \rightarrow H_{\pi}$ satisfying $J A J \subset A^{\prime}$ (see also Section \ref{laatsec:jcbgt}).

It follows directly from Proposition \ref{laatproposition:cuntzdixmierproperty} that $\phi(x)1_A \in \overline{\mathrm{conv}\{uxu^* \,|\, u \in \mathcal{U}(\mathcal{F})\}}^{\|.\|}$ for all $x \in A$. Also, the analogue of Lemma \ref{laatlemma:cuntzdixmierpropertyfinitesets} follows in a similar way, as well as the fact that $\mathcal{F}^{\prime} \cap A = \mathbb{C}1$.

It is elementary to check that
\[
  A_{\lambda,k}:=\{x \in A \mid \rho_t(x)=\lambda^{ikt}x \forall t \in \mathbb{R}\}=\{x \in A \mid \phi(xy)=\lambda^{k}\phi(yx) \forall y \in \mathcal{O}_n\}.
\]
Let $\Lambda:=\{2^p3^q \mid p,q \in \mathbb{Z}\} \cap (0,1)$. For all $\lambda \in \Lambda$ and $k \in \mathbb{Z}$, we have $A_{\lambda,k} \neq \{0\}$. This leads, analogous to Lemma \ref{laatlemma:ck}, to the following result.
\begin{lemma}
  Let $\lambda \in \Lambda$. For every $k \in \mathbb{Z}$ there exists a $c_{\lambda,k} \in A$ such that
\[
  \phi(c_{\lambda,k}^*c_{\lambda,k})=\lambda^{-\frac{k}{2}}, \qquad \phi(c_{\lambda,k}c_{\lambda,k}^*)=\lambda^{\frac{k}{2}}
\]
and
\[
  \langle c_{\lambda,k}Jc_{\lambda,k}J\xi,\xi \rangle=1.
\]
\end{lemma}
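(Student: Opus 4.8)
The plan is to imitate the proof of Lemma \ref{laatlemma:ck} (and of Lemma 2.1 of \cite{laathmeffrosruan}): produce the element as a suitably normalised vector in the spectral subspace $A_{\lambda,k}$, and then read off the three identities from the defining relation of $A_{\lambda,k}$ together with Tomita--Takesaki theory for the pair $(A,\phi)$. First I would fix $\lambda\in\Lambda$ and $k\in\mathbb{Z}$ and choose a nonzero $x\in A_{\lambda,k}$, which exists by the remark preceding the statement. Since $A=\mathcal{O}_2\otimes_{\min}\mathcal{O}_3$ is simple and $\phi=\phi_2\otimes\phi_3$ is a product of the faithful KMS states $\phi_n=\tau_n\circ F_{n,0}$, the state $\phi$ is faithful, so $\phi(x^*x)>0$. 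As $A_{\lambda,k}$ is a linear subspace, I may rescale by a positive constant and set $c_{\lambda,k}=\beta x$ with $\beta=(\lambda^{-k/2}/\phi(x^*x))^{1/2}$, so that $\phi(c_{\lambda,k}^*c_{\lambda,k})=\lambda^{-k/2}$. Taking $x=c_{\lambda,k}$ and $y=c_{\lambda,k}^*$ in the defining relation $\phi(xy)=\lambda^{k}\phi(yx)$ then gives $\phi(c_{\lambda,k}c_{\lambda,k}^*)=\lambda^{k}\phi(c_{\lambda,k}^*c_{\lambda,k})=\lambda^{k/2}$, which is the second identity. This disposes of the first two equalities.

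The third identity is where the modular theory enters, and is the main point. Writing $c=c_{\lambda,k}$, let $S=J\Delta^{1/2}$ be the closure of $x\xi\mapsto x^*\xi$, so that $Sc\xi=c^*\xi$. The relation $\rho_t(c)=\lambda^{ikt}c$ extends to the entire function $z\mapsto\lambda^{ikz}c$, so $c$ is analytic for $\rho$, and since $\rho$ is (up to reparametrisation) the modular automorphism group of $\phi$, the vector $c\xi$ lies in the domain of $\Delta^{1/2}$ and is an eigenvector, say $\Delta^{1/2}c\xi=\mu c\xi$ with $\mu>0$. The eigenvalue is pinned down, without tracking any sign conventions, by using that $J$ is antiunitary: $\mu\|c\xi\|=\|\Delta^{1/2}c\xi\|=\|J\Delta^{1/2}c\xi\|=\|Sc\xi\|=\|c^*\xi\|$, and since $\|c\xi\|^2=\phi(c^*c)=\lambda^{-k/2}$ and $\|c^*\xi\|^2=\phi(cc^*)=\lambda^{k/2}$, this forces $\mu=\lambda^{k/2}$. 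Applying $J$ to $J\Delta^{1/2}c\xi=c^*\xi$ and using $J^2=1$, $J\xi=\xi$, and $\mu\in\mathbb{R}$ gives $Jc\xi=\mu^{-1}c^*\xi=\lambda^{-k/2}c^*\xi$, whence $JcJ\xi=Jc\xi=\lambda^{-k/2}c^*\xi$. Therefore
\[
  \langle cJcJ\xi,\xi\rangle=\lambda^{-k/2}\langle cc^*\xi,\xi\rangle=\lambda^{-k/2}\phi(cc^*)=\lambda^{-k/2}\lambda^{k/2}=1,
\]
as required.

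The step I expect to be delicate is the identification of the eigenvalue of $\Delta^{1/2}$ on $c\xi$, and, more basically, the claim that $c\xi$ is a genuine eigenvector: this rests on the analyticity of $c$ for the modular group and on the precise correspondence between $\rho$ and the modular automorphism group coming from the KMS property of $\phi$ (Definition \ref{laatdefinition:kmsstate}). Rather than chase the sign conventions through the various KMS normalisations, I would rely on the convention-free norm computation above, which uses only $S=J\Delta^{1/2}$ and the antiunitarity of $J$, to fix $\mu=\lambda^{k/2}$. I note finally that the non-periodicity of $\rho$ plays no role here: for each fixed pair $(\lambda,k)$ the argument uses only that $A_{\lambda,k}\neq\{0\}$ and that its elements are $\rho$-eigenvectors, exactly as in the single-Cuntz-algebra case treated in Lemma \ref{laatlemma:ck}.
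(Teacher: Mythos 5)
Your proof is correct and follows the same route the paper intends: the paper gives no argument beyond deferring (via its Lemma on $c_k$) to Lemma 2.1 of Haagerup--Musat, whose proof is exactly this --- normalise a nonzero element of the spectral subspace $A_{\lambda,k}$, get the second identity from the relation $\phi(xy)=\lambda^{k}\phi(yx)$, and compute $\langle cJcJ\xi,\xi\rangle$ via the modular operator acting on the eigenvector $c\xi$. Your convention-free determination of the eigenvalue of $\Delta^{1/2}$ on $c\xi$ by comparing $\|c\xi\|$ with $\|c^{*}\xi\|$ is a clean way to sidestep the KMS sign conventions.
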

In this way, by the analogues of Lemma 2.4, Lemma 2.5 and Proposition 2.6 of \cite{laathmeffrosruan}, we obtain the following result, which is the analogue of \cite{laathmeffrosruan}, Proposition 2.8.
\begin{proposition}
Let $\lambda \in \Lambda$, and let $C(\lambda)=\sqrt{(\lambda^{\frac{1}{2}}+\lambda^{-\frac{1}{2}}) \slash 2}$. Let $u:A \times B \rightarrow \mathbb{C}$ be a jointly completely bounded bilinear form. Then there exist states $f^{\lambda}_1,f^{\lambda}_2$ on $A$ and $g^{\lambda}_1,g^{\lambda}_2$ on $B$ such that for all $a \in A$ and $b \in B$,
\[
  |u(a,b)| \leq C(\lambda)\|u\|_{jcb}\left(f^{\lambda}_1(aa^*)^{\frac{1}{2}}g^{\lambda}_1(b^*b)^{\frac{1}{2}} + f^{\lambda}_2(a^*a)^{\frac{1}{2}}g^{\lambda}_2(bb^*)^{\frac{1}{2}}\right).
\]
\end{proposition}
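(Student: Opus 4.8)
The plan is to follow the scheme of \cite{laathmeffrosruan}, Proposition 2.8, almost verbatim, with the type $\mathrm{III}$ factor there replaced by $N:=\mathcal{O}_2\otimes\mathcal{O}_3$, its KMS state $\phi=\phi_2\otimes\phi_3$, and the modular data $(J,\xi)$ fixed above. First I would apply the analogue of Proposition \ref{laatproposition:tensor} to produce a bounded bilinear form $\hat{u}$ on $(A\otimes_{\min}N)\times(B\otimes_{\min}JNJ)$ with $\hat{u}(a\otimes c,b\otimes d)=u(a,b)\langle cd\xi,\xi\rangle$ and $\|\hat{u}\|\leq\|u\|_{jcb}$; here one again uses that $N$ is simple and nuclear, so that $JNJ$ is a commuting copy of $N$ and the spatial and minimal tensor norms agree. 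To this $\hat{u}$ I would then apply the (classical) noncommutative Grothendieck Theorem of Haagerup, obtaining states $F_1,F_2$ on $A\otimes_{\min}N$ and $G_1,G_2$ on $B\otimes_{\min}JNJ$ with $|\hat{u}(x,y)|\leq\|\hat{u}\|\bigl(F_1(xx^*)^{\frac12}G_1(y^*y)^{\frac12}+F_2(x^*x)^{\frac12}G_2(yy^*)^{\frac12}\bigr)$.

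The next step is to normalise these states in the $N$-direction, which is the content of the analogues of Lemmas 2.4 and 2.5 and Proposition 2.6 of \cite{laathmeffrosruan}. Using the analogue of Lemma \ref{laatlemma:cuntzdixmierpropertyfinitesets} for $N$ --- convex combinations $\alpha_j\in\mathrm{conv}\{\mathrm{ad}(v)\mid v\in\mathcal{U}(\mathcal{F})\}$, with $\mathcal{F}=\mathcal{F}_2\otimes\mathcal{F}_3$ and $\alpha_j(m)\to\phi(m)1$ --- together with $\mathcal{F}'\cap N=\mathbb{C}1$, I would average $F_i$ and $G_i$ over $\mathrm{id}\otimes\alpha_j$ and pass to weak-$*$ limits. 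The key point is that $\hat{u}$ is invariant under the simultaneous action $x\mapsto(1\otimes v)x(1\otimes v^*)$, $y\mapsto(1\otimes JvJ)y(1\otimes Jv^*J)$ for $v\in\mathcal{U}(\mathcal{F})$: since $JNJ\subset N'$ and $Jv^*J\xi=v\xi$ for $v$ in the centraliser of $\phi$, one checks $\langle vcv^*\,JvJ\,d\,Jv^*J\,\xi,\xi\rangle=\langle cd\xi,\xi\rangle$. Convexity and the Cauchy--Schwarz inequality then show that the averaged states still satisfy the Grothendieck bound, so I may assume $F_i$ and $G_i$ restrict to $\phi$ on $N$ and to $\phi(J\cdot J)$ on $JNJ$.

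Finally I would substitute the modular elements. Putting $x=a\otimes c_{\lambda,k}$ and $y=b\otimes Jc_{\lambda,k}J$ and invoking the preceding lemma, the identity $\langle c_{\lambda,k}Jc_{\lambda,k}J\xi,\xi\rangle=1$ collapses the left-hand side to $u(a,b)$, while the KMS values $\phi(c_{\lambda,k}c_{\lambda,k}^*)=\lambda^{k/2}$ and $\phi(c_{\lambda,k}^*c_{\lambda,k})=\lambda^{-k/2}$ enter the right-hand side through $xx^*,x^*x,y^*y,yy^*$. Renormalising the averaged $F_i,G_i$ by these values produces genuine states $f^\lambda_1,f^\lambda_2$ on $A$ and $g^\lambda_1,g^\lambda_2$ on $B$, and the scalar bookkeeping --- the analogue of \cite{laathmeffrosruan}, Lemma 2.7 --- converts the modular-weighted estimate into the asserted two-term inequality. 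The constant is where all the content sits: because $\phi$ is not tracial one cannot balance the two modular eigenvalues to $1$, and the best one can do is absorb the factors $\lambda^{\pm1/4}$ symmetrically, at the cost of their quadratic mean $C(\lambda)=\sqrt{(\lambda^{1/2}+\lambda^{-1/2})/2}$, which is optimised at $k=1$.

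I expect the genuine obstacle to be precisely this constant bookkeeping, together with the transfer of the Tomita--Takesaki arguments of \cite{laathmeffrosruan} from the type $\mathrm{III}$ factor to $N$: one must work with the normal extension of $\phi$ to $N''$ and the associated $W^*$-dynamical KMS structure, verify the identity $Jv^*J\xi=v\xi$ on the centraliser, and control how the averaging interacts with the two separate terms. The remaining, routine, point is that $\Lambda$ accumulates at $1$ (as $\log 3/\log 2\notin\mathbb{Q}$, the set $\{2^p3^q\}$ is dense in $(0,\infty)$), so that $C(\lambda)\to1$; this is what makes the present proposition strong enough to yield $K=1$ in the subsequent limiting argument over the states $f^\lambda_i,g^\lambda_i$.
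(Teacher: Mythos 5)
Your proposal follows the same route as the paper: tensor with $N=\mathcal{O}_2\otimes\mathcal{O}_3$ equipped with its KMS state, build $\hat{u}$ via the analogue of Proposition \ref{laatproposition:tensor}, apply the noncommutative Grothendieck Theorem, normalise the resulting states by Dixmier-type averaging over $\mathcal{U}(\mathcal{F})$ using the analogues of Lemmas 2.4--2.5 and Proposition 2.6 of Haagerup--Musat, and then feed in the elements $c_{\lambda,k}$ together with the bookkeeping of their Lemma 2.7 to extract $C(\lambda)$. This is exactly the argument the paper invokes (it only sketches it by reference to \cite{laathmeffrosruan}), so your reconstruction is correct and essentially identical in approach.
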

Note that $C(\lambda) > 1$ for $\lambda \in \Lambda$. Let $(\lambda_n)_{n \in \mathbb{N}}$ be a sequence in $\Lambda$ converging to $1$. By the weak*-compactness of the unit balls $(A^*_{+})_1$ and $(B^*_{+})_1$ of $A^*_{+}$ and $B^*_{+}$, respectively, the Grothendieck Theorem for jointly completely bounded bilinear forms with $K=1$ follows in the same way as in the ``Proof of Theorem 1.1'' in \cite{laathmeffrosruan}.
\begin{remark} \label{laatrmk:isomorphism}
By Kirchberg's second ``Geneva Theorem'' (see \cite{laatkirchbergphillips} for a proof), we know that $\mathcal{O}_2 \otimes \mathcal{O}_3 \cong \mathcal{O}_2$. This implies that the best constant in Theorem $\ref{laattheorem:jcbgt}$ can also be obtained by tensoring with the single Cuntz algebra $\mathcal{O}_2$, but considered with a different action that defines the $C^{\ast}$-dynamical system. Since the explicit form of the isomorphism is not known, we cannot adjust the action accordingly.
\end{remark}

\section{A remark on Blecher's inequality} \label{laatsec:blecher}
In \cite{laatblecher}, Blecher stated a conjecture \index{Blecher's inequality} about the norm of elements in the algebraic tensor product of two $C^{\ast}$-algebras. Equivalently, the conjecture can be formulated as follows (see Conjecture $0.2^{\prime}$ of \cite{laatpsgt}). For a bilinear form $u:A \times B \rightarrow \mathbb{C}$, put $u^t(b,a)=u(a,b)$.
\begin{theorem}[Blecher's inequality] \label{laattheorem:blecher2}
  There is a constant $K$ such that any jointly completely bounded bilinear form $u:A \times B \rightarrow \mathbb{C}$ on $C^{\ast}$-algebras $A$ and $B$ decomposes as a sum $u=u_1+u_2$ of completely bounded bilinear forms on $A \times B$, and $\|u_1\|_{cb} + \|u_2^t\|_{cb} \leq K\|u\|_{jcb}$.
\end{theorem}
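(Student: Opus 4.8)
The plan is to derive Blecher's inequality (Theorem \ref{laattheorem:blecher2}) directly from the JCB Grothendieck Theorem (Theorem \ref{laattheorem:jcbgt}), which we may now assume with constant $K=1$. The idea is that the two terms appearing in the JCB estimate are exactly the data needed to build two completely bounded forms $u_1$ and $u_2$, using the characterisation \eqref{laateq:cbconstant} of complete boundedness via a single pair of states. So the first step is to take the states $f_1,f_2$ on $A$ and $g_1,g_2$ on $B$ produced by Theorem \ref{laattheorem:jcbgt}, which satisfy
\[
  |u(a,b)| \leq \|u\|_{jcb}\left( f_1(aa^*)^{\frac{1}{2}}g_1(b^*b)^{\frac{1}{2}} + f_2(a^*a)^{\frac{1}{2}}g_2(bb^*)^{\frac{1}{2}} \right).
\]
The two summands have the correct asymmetric form: the first term, with $aa^*$ and $b^*b$, matches the defining inequality for $\|u\|_{cb}$, while the second, with $a^*a$ and $bb^*$, matches the defining inequality for $\|u^t\|_{cb}$ (after transposing).

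Next I would actually manufacture the decomposition $u = u_1 + u_2$. This is the genuinely nontrivial point, since the JCB estimate only bounds $|u(a,b)|$ from above by a sum of two terms; it does not split $u$ itself into two pieces. The natural approach is a Hahn--Banach / factorisation argument: one passes to the GNS representations associated with the states, realises $u$ as a bounded form on a suitable tensor or column/row Hilbert module construction, and uses the bound to factor through a direct sum of two Hilbertian pieces corresponding to the two terms. Concretely, the estimate lets us embed the relevant completion into an $\ell^2$-direct sum of two spaces, and projecting onto each summand yields bounded forms $u_1$ and $u_2$ with $u=u_1+u_2$. This is precisely the mechanism by which Pisier and Shlyakhtenko derive the Blecher decomposition from the two-term estimate in \cite{laatpsgt}, and I would follow that route.

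Once the decomposition exists, the norm estimates are comparatively routine. For $u_1$, the inequality $|u_1(a,b)| \leq \|u\|_{jcb}\, f_1(aa^*)^{\frac{1}{2}} g_1(b^*b)^{\frac{1}{2}}$ is exactly the condition \eqref{laateq:cbconstant} certifying $\|u_1\|_{cb} \leq \|u\|_{jcb}$. For $u_2$, the roles of $a$ and $a^*$ (and of $b$ and $b^*$) are interchanged, so the corresponding bound is a certificate for $u_2^t$ rather than $u_2$: writing $u_2^t(b,a) = u_2(a,b)$, the term $f_2(a^*a)^{\frac{1}{2}} g_2(bb^*)^{\frac{1}{2}}$ reads as $g_2((b)(b)^*)^{\frac{1}{2}} f_2(a^* a)^{\frac{1}{2}}$, which is \eqref{laateq:cbconstant} for $u_2^t: B \times A \to \mathbb{C}$. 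Hence $\|u_2^t\|_{cb} \leq \|u\|_{jcb}$. Adding the two gives $\|u_1\|_{cb} + \|u_2^t\|_{cb} \leq 2\|u\|_{jcb}$, so the theorem holds with $K=2$.

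**The main obstacle** I anticipate is not the bookkeeping with the two states but the construction of the \emph{additive} splitting $u=u_1+u_2$ from a merely \emph{submultiplicative}-style upper bound. The JCB theorem controls the absolute value of $u$ by a sum, and turning this into honest linear forms summing to $u$ requires a careful functional-analytic argument (via GNS spaces and a projection onto a two-term $\ell^2$-sum) rather than an algebraic manipulation. Care is also needed to ensure $u_1,u_2$ are genuinely bilinear and bounded on all of $A\times B$, not just on the image of the factorisation. This is exactly where the equivalence \eqref{laateq:cbconstant} between complete boundedness and the single-state estimate earns its keep: it converts the geometric factorisation back into the clean cb-norm bounds needed to conclude.
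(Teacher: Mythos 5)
First, a point of comparison: the paper does not actually prove Theorem \ref{laattheorem:blecher2}. It states it, attributes the proof with $K=2$ to Haagerup and Musat (\cite[Section 3]{laathmeffrosruan}), and the paper's own contribution in Section \ref{laatsec:blecher} is the \emph{lower} bound $K>1$. So your proposal is supplying an argument where the paper only supplies a citation, and it has to stand on its own.

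It does not quite stand, and the gap is exactly at the step you yourself flag as ``the genuinely nontrivial point'' before deferring to an unspecified factorisation argument: producing the additive splitting $u=u_1+u_2$ from the pointwise two-term estimate of Theorem \ref{laattheorem:jcbgt}. To make your sketch precise via Hahn--Banach, you would need seminorms $N_1,N_2$ on the algebraic tensor product $A\otimes B$ with $N_1(a\otimes b)\leq \|u\|_{jcb}f_1(aa^*)^{\frac{1}{2}}g_1(b^*b)^{\frac{1}{2}}$ (and similarly for $N_2$) such that $|u(x)|\leq N_1(x)+N_2(x)$ for \emph{all} $x\in A\otimes B$; extending $u$ from the diagonal of $(A\otimes B,N_1)\oplus_1(A\otimes B,N_2)$ would then give $u_1,u_2$ with $|u_i(x)|\leq N_i(x)$, and \eqref{laateq:cbconstant} would finish. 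But the JCB theorem only controls $u$ on elementary tensors. Applying it term by term to a representation $x=\sum_i a_i\otimes b_i$ and using Cauchy--Schwarz gives, for that particular representation, a bound by $G_1(\mathrm{rep})+G_2(\mathrm{rep})$, where the $G_j$ are Haagerup-type quantities; taking the infimum over representations yields $|u(x)|\leq\inf_{\mathrm{rep}}\left(G_1+G_2\right)$, whereas the seminorms you need are $N_j(x)=\inf_{\mathrm{rep}}G_j$, and $\inf_{\mathrm{rep}}\left(G_1+G_2\right)\geq N_1(x)+N_2(x)$ is the wrong direction: nothing guarantees a single representation that is simultaneously near-optimal for the Haagerup norm and its transpose. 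This is precisely why Haagerup and Musat do not deduce the decomposition from the statement of the JCB theorem; in \cite[Section 3]{laathmeffrosruan} they rerun the tensor-product construction, apply the decomposition form of Haagerup's noncommutative Grothendieck theorem to the bounded form $\hat u$ upstairs, and pull the two summands back down. (Likewise, Pisier and Shlyakhtenko obtain their Blecher-type decomposition from their factorisation theorem in \cite{laatpsgt}, not from the Effros--Ruan-type estimate.) Your final bookkeeping with $u_2^t$ and the constant $K=2$ are correct as far as they go, and Section \ref{laatsec:blecher} of the paper confirms that some loss ($K>1$) is unavoidable; but as written, the existence of the decomposition is asserted rather than proved.
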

A version of this conjecture for exact operator spaces and a version for pairs of $C^{\ast}$-algebras, one of which is assumed to be exact, were proved by Pisier and Shlyakhtenko \cite{laatpsgt}. They also showed that the best constant in Theorem \ref{laattheorem:blecher2} is greater than or equal to $1$. Haagerup and Musat proved that Theorem \ref{laattheorem:blecher2} holds with $K=2$ \cite[Section 3]{laathmeffrosruan}. We show that the best constant is actually strictly greater than $1$.

In the following, let $\mathrm{OH}(I)$ denote Pisier's operator Hilbert space \index{operator Hilbert space} based on $\ell^2(I)$ for some index set $I$. Recall the noncommutative little Grothendieck Theorem. \index{Grothendieck Theorem!noncommutative little}
\begin{theorem}[Noncommutative little Grothendieck Theorem]
  Let $A$ be a $C^{\ast}$-algebra, and let $T:A \rightarrow \mathrm{OH}(I)$ be a completely bounded map. Then there exists a universal constant $C > 0$ and states $f_1$ and $f_2$ on $A$ such that for all $a \in A$,
\begin{equation} \nonumber
  \|Ta\| \leq C\|T\|_{cb}f_1(aa^*)^{\frac{1}{4}}f_2(a^*a)^{\frac{1}{4}}.
\end{equation}
\end{theorem}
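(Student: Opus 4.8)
The plan is to deduce the statement from the JCB Grothendieck Theorem (Theorem~\ref{laattheorem:jcbgt}) proved above, together with the self-duality of Pisier's operator Hilbert space. The idea is to build a \emph{bilinear} form out of the single completely bounded map $T$, apply Theorem~\ref{laattheorem:jcbgt} to it, and then recover the one-variable estimate by restricting to the diagonal. Writing $\overline{A}$ for the complex-conjugate $C^{\ast}$-algebra of $A$, I would consider the Gram-type form
\[
  u\colon A \times \overline{A} \to \mathbb{C}, \qquad u(a,\overline{b}) = \langle Ta, Tb \rangle_{\ell^2(I)}.
\]
This is genuinely bilinear: $b \mapsto \langle Ta, Tb\rangle$ is conjugate-linear, and the passage $b \mapsto \overline{b}$ undoes the conjugation.

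The crucial step is to show that $u$ is jointly completely bounded with $\|u\|_{jcb} \leq \|T\|_{cb}^2$. I would factor $u$ as
\[
  A \times \overline{A} \xrightarrow{\,T \times \overline{T}\,} \mathrm{OH}(I) \times \overline{\mathrm{OH}(I)} \xrightarrow{\;p\;} \mathbb{C},
\]
where $\overline{T}(\overline{b}) = \overline{Tb}$ satisfies $\|\overline{T}\|_{cb} = \|T\|_{cb}$, and $p$ is the Hilbert-space pairing. By Pisier's theorem that $\mathrm{OH}(I)$ is completely isometrically self-dual, one has $\overline{\mathrm{OH}(I)} \cong \mathrm{OH}(I)^*$, and under this identification $p$ is precisely the canonical duality pairing; its associated linear map is the complete isometry $\mathrm{OH}(I) \hookrightarrow \mathrm{OH}(I)^{**}$, so $\|p\|_{jcb} = 1$. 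Since the linear map associated with $u$ factors as $\widetilde{u} = (\overline{T})^* \circ \widetilde{p} \circ T$, submultiplicativity of the cb norm under composition (together with $\|(\overline{T})^*\|_{cb} = \|\overline{T}\|_{cb}$) yields $\|u\|_{jcb} = \|\widetilde{u}\|_{cb} \leq \|T\|_{cb} \cdot 1 \cdot \|\overline{T}\|_{cb} = \|T\|_{cb}^2$. This self-duality input is exactly the source of the characteristic exponent $\tfrac14$.

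Finally, I would apply Theorem~\ref{laattheorem:jcbgt} to $u$, obtaining states $\phi_1,\phi_2$ on $A$ and $\psi_1,\psi_2$ on $\overline{A}$ with
\[
  |u(a,\overline{b})| \leq \|T\|_{cb}^2\big(\phi_1(aa^*)^{\frac12}\psi_1(\overline{b}^*\overline{b})^{\frac12} + \phi_2(a^*a)^{\frac12}\psi_2(\overline{b}\,\overline{b}^*)^{\frac12}\big).
\]
Restricting to $b = a$ makes the left-hand side equal to $\|Ta\|^2$, while each $\psi_i$ on $\overline{A}$ corresponds to a state $\widetilde{\psi}_i$ on $A$ via $x \mapsto \overline{\psi_i(\overline{x})}$, turning the right-hand side into a sum of two terms, each of the form $(\text{state of }aa^*)^{1/2}(\text{state of }a^*a)^{1/2}$. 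Applying the Cauchy--Schwarz inequality to this two-term sum and setting $f_1 := \tfrac12(\phi_1 + \widetilde{\psi}_2)$ and $f_2 := \tfrac12(\widetilde{\psi}_1 + \phi_2)$ collapses it to $2\,f_1(aa^*)^{1/2}f_2(a^*a)^{1/2}$; taking square roots gives $\|Ta\| \leq \sqrt{2}\,\|T\|_{cb}\,f_1(aa^*)^{1/4}f_2(a^*a)^{1/4}$, which is the claim with $C = \sqrt{2}$. I expect the main obstacle to be the middle step, namely correctly invoking the self-duality of $\mathrm{OH}(I)$ and bookkeeping the conjugate-linear identifications between $A$ and $\overline{A}$ and between $\mathrm{OH}(I)$ and its dual; by contrast, the diagonal restriction and the Cauchy--Schwarz symmetrization are routine. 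Sharpening the constant $\sqrt{2}$ to the optimal value would require the quantitative refinement of Haagerup and Musat rather than this soft argument.
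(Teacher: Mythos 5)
Your argument is correct, but note that the paper does not prove this statement at all: it is recalled as a known result, with the quantitative bounds $C(T)\leq\sqrt{2}\,\|T\|_{cb}$ and $\|T\|_{cb}\leq C(T)$ attributed to Haagerup--Musat and Pisier--Shlyakhtenko respectively. What you have written is the standard deduction of the noncommutative little Grothendieck Theorem from the big one (essentially the route Haagerup and Musat themselves take to get the constant $\sqrt{2}$), and it is not circular here, since the proof of Theorem \ref{laattheorem:jcbgt} in Sections 4--5 goes through the noncommutative Grothendieck Theorem for bounded bilinear forms, not through the little one. Your construction is also exactly the one the paper uses later, in the opposite direction, in the proof that the best constant in Blecher's inequality exceeds $1$: there the map $V=\overline{T^*}JT$ is the linear map $\tilde{u}$ associated with your Gram form $u(a,\overline{b})=\langle Ta,Tb\rangle$, the identity $\|u\|_{jcb}=\|T\|_{cb}^2$ is the equality version of your inequality $\|u\|_{jcb}\leq\|T\|_{cb}^2$ (the paper cites Pisier--Shlyakhtenko for it), and the diagonal restriction plus Cauchy--Schwarz symmetrization yielding $\|Ta\|^2\leq(\cdot)\,\tilde{f}(aa^*)^{1/2}\tilde{g}(a^*a)^{1/2}$ is verbatim the computation in that proof. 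The only points requiring care are the ones you already flag: the complete isometry $\overline{\mathrm{OH}(I)}\cong\mathrm{OH}(I)^*$ and the bookkeeping of conjugations (with the usual convention $\overline{x}\,\overline{y}=\overline{xy}$, so that $\overline{a}^*\overline{a}=\overline{a^*a}$, your state identifications come out right). Your constant $C=\sqrt{2}$ matches the one the paper quotes; improving it is a separate, harder matter, and in fact the paper's Section 6 shows the gap between $\|T\|_{cb}$ and $C(T)$ can be strict, so no soft argument of this kind will give $C=1$.
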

For a completely bounded map $T:A \rightarrow \mathrm{OH}(I)$, denote by $C(T)$ the smallest constant $C > 0$ for which there exist states $f_1$, $f_2$ on $A$ such that for all $a \in A$, we have $\|Ta\| \leq Cf_1(aa^∗)^{\frac{1}{4}}f_2(a^*a∗)^{\frac{1}{4}}$. In \cite{laathmeffrosruan}, Haagerup and Musat proved that $C(T) \leq \sqrt{2}\|T\|_{cb}$. Pisier and Shlyakhtenko proved in \cite{laatpsgt} that $\|T\|_{cb} \leq C(T)$ for all $T:A \rightarrow \mathrm{OH}(I)$. Haagerup and Musat proved that for a certain $T:M_3(\mathbb{C}) \rightarrow \mathrm{OH}(3)$, the inequality is actually strict, i.e., $\|T\|_{cb} < C(T)$ \cite[Section 7]{laathmfactorization}. We can now apply this knowledge to improve the best constant in Theorem \ref{laattheorem:blecher2}.
\begin{theorem}
  The best constant $K$ in Theorem \ref{laattheorem:blecher2} is strictly greater than $1$.
\end{theorem}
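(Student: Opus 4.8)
The plan is to argue by contraposition, converting Blecher's inequality into an estimate for the noncommutative little Grothendieck Theorem. Concretely, I will show that if Theorem \ref{laattheorem:blecher2} holds with constant $K$, then $C(T) \leq \sqrt{K}\|T\|_{cb}$ for every completely bounded map $T:A \to \mathrm{OH}(I)$. Since Haagerup and Musat exhibited a map $T:M_3(\mathbb{C}) \to \mathrm{OH}(3)$ with $\|T\|_{cb} < C(T)$, substituting this $T$ will give $1 < C(T)/\|T\|_{cb} \leq \sqrt{K}$, and hence $K > 1$.

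First I would manufacture a jointly completely bounded bilinear form out of $T$. Given $T:A \to \mathrm{OH}(I)$, I use the complete isometry $\mathrm{OH}(I)^* \cong \overline{\mathrm{OH}(I)}$ to define $u:A \times \bar{A} \to \mathbb{C}$ by $u(a,\bar b) = \langle Ta, \overline{Tb}\rangle$, where $\langle \cdot,\cdot \rangle$ is the self-duality pairing of $\mathrm{OH}(I)$ and $\bar{A}$ is the conjugate $C^{\ast}$-algebra. Writing $\bar{T}:\bar{A} \to \overline{\mathrm{OH}(I)} \cong \mathrm{OH}(I)^*$ for the conjugate of $T$, one checks that $\tilde{u} = (\bar{T})^* \circ T$, so that, using the complete isometry above together with the fact that conjugation and taking adjoints preserve the $cb$-norm, $\|u\|_{jcb} = \|\tilde{u}\|_{cb} \leq \|T\|_{cb}^2$. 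The decisive evaluation is the diagonal one, $u(a,\bar a) = \langle Ta, \overline{Ta}\rangle = \|Ta\|^2$.

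Next I apply Blecher's inequality to $u$, writing $u = u_1 + u_2$ with $\|u_1\|_{cb} + \|u_2^t\|_{cb} \leq K\|u\|_{jcb} \leq K\|T\|_{cb}^2$. Spelling out the defining estimate \eqref{laateq:cbconstant} for $u_1$ and for $u_2^t$ in terms of states and evaluating at $(a,\bar a)$, one obtains
\[
  \|Ta\|^2 \leq \|u_1\|_{cb}\, f_1(aa^*)^{\frac{1}{2}}\hat{g}_1(a^*a)^{\frac{1}{2}} + \|u_2^t\|_{cb}\, \hat{g}_2(aa^*)^{\frac{1}{2}} f_2(a^*a)^{\frac{1}{2}},
\]
where $f_1,f_2$ are states on $A$ and $\hat{g}_1,\hat{g}_2$ are the states on $A$ induced, via $x \mapsto g_i(\bar{x})$, by the states $g_i$ on $\bar{A}$. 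Putting $\alpha = \|u_1\|_{cb}$ and $\beta = \|u_2^t\|_{cb}$, the elementary Cauchy--Schwarz estimate $x^{\frac{1}{2}}y^{\frac{1}{2}} + z^{\frac{1}{2}}w^{\frac{1}{2}} \leq (x+z)^{\frac{1}{2}}(y+w)^{\frac{1}{2}}$ collapses the two terms into one, so that with the states $F_1 = (\alpha f_1 + \beta \hat{g}_2)/(\alpha+\beta)$ and $F_2 = (\alpha \hat{g}_1 + \beta f_2)/(\alpha+\beta)$ on $A$ one gets $\|Ta\|^2 \leq (\alpha+\beta) F_1(aa^*)^{\frac{1}{2}} F_2(a^*a)^{\frac{1}{2}} \leq K\|T\|_{cb}^2 \, F_1(aa^*)^{\frac{1}{2}} F_2(a^*a)^{\frac{1}{2}}$. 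Taking square roots yields $C(T) \leq \sqrt{K}\|T\|_{cb}$, and combining this with the strict example of Haagerup and Musat completes the argument.

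I expect the main obstacle to lie in the second step: correctly deriving $\|u\|_{jcb} \leq \|T\|_{cb}^2$ from the self-duality of $\mathrm{OH}(I)$ while keeping precise track of the conjugate algebra $\bar{A}$ — its involution and multiplication, the factorization $\tilde{u} = (\bar{T})^* \circ T$, and the passage between states on $\bar{A}$ and on $A$. By contrast, the Cauchy--Schwarz merging of the two one-sided bounds is routine once those bounds are in place, and the final contradiction with $\|T\|_{cb} < C(T)$ is immediate.
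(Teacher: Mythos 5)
Your argument is correct and is essentially the paper's own proof: both construct the jointly completely bounded form $u$ on $A\times\overline{A}$ from $T$ via the self-duality of $\mathrm{OH}(I)$ (so that $u(a,\overline{a})=\|Ta\|^2$ and $\|u\|_{jcb}\leq\|T\|_{cb}^2$), apply Blecher's decomposition, merge the two state bounds by Cauchy--Schwarz into a single little-Grothendieck estimate, and contradict the Haagerup--Musat example with $\|T\|_{cb}<C(T)$. The only cosmetic difference is that the paper records the equality $\|u\|_{jcb}=\|T\|_{cb}^2$ (citing Pisier--Shlyakhtenko), whereas you use only the inequality $\leq$, which indeed suffices.
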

\begin{proof}
  Let $A$ be a $C^{\ast}$-algebra, and let $T:A \rightarrow \mathrm{OH}(I)$ be a completely bounded map for which $\|T\|_{cb} < C(T)$. Define the map $V=\overline{T^*}JT$ from $A$ to $\overline{A^*}=\overline{A}^*$, where $J:\mathrm{OH}(I) \rightarrow \overline{\mathrm{OH}(I)^*}$ is the canonical complete isomorphism and $T^*:\mathrm{OH}(I)^* \rightarrow A^*$ is the adjoint of $T$. Hence, $V$ is completely bounded. It follows that $V=\tilde{u}$ for some jointly completely bounded bilinear form $u:A \times \overline{A} \rightarrow \mathbb{C}$. Moreover, $\|u\|_{jcb}=\|V\|_{cb}=\|T\|_{cb}^2$, where the last equality follows from the proof of Corollary 3.4 in \cite{laatpsgt}. By Blecher's inequality, i.e., Theorem \ref{laattheorem:blecher2}, we have a decomposition $u=u_1+u_2$ such that $\|u_1\|_{cb}+\|u_2^t\|_{cb} \leq K\|u\|_{jcb}$.

By the second characterization of completely bounded bilinear forms (in the Christensen-Sinclair sense) in Section \ref{laatsec:bilforms}, we obtain
\begin{equation} \nonumber
  |u_1(a,b)| \leq \|u_1\|_{cb} f_1(aa^*)^{\frac{1}{2}}g_1(b^*b)^{\frac{1}{2}}, \quad |u_2(a,b)| \leq \|u_2^t\|_{cb} f_2(a^*a)^{\frac{1}{2}}g_2(bb^*)^{\frac{1}{2}}.
\end{equation}
It follows that
\begin{equation} \nonumber
  |u(a,b)| \leq \|u_1\|_{cb}f_1(aa^*)^{\frac{1}{2}}g_1(b^*b)^{\frac{1}{2}} + \|u_2^t\|_{cb}f_2(a^*a)^{\frac{1}{2}}g_2(bb^*)^{\frac{1}{2}}.
\end{equation}
Let $\overline{g}_i(a)=g_i(\overline{a^*})$ for $i=1,2$, and define states
\[
  \tilde{f}=\frac{\|u_1\|_{cb}f_1+\|u_2^t\|_{cb}\overline{g}_2}{\|u_1\|_{cb}+\|u_2^t\|_{cb}} \;\textrm{ and }\; \tilde{g}=\frac{\|u_1\|_{cb}\overline{g}_1+\|u_2^t\|_{cb}f_2}{\|u_1\|_{cb}+\|u_2^t\|_{cb}}.
\]
We obtain
\begin{equation} \nonumber
\begin{split}
  &\|T(a)\|^2 = |u(a,\overline{a})| \leq \|u_1\|_{cb}f_1(aa^*)^{\frac{1}{2}}\overline{g}_1(a^*a)^{\frac{1}{2}} + \|u_2^t\|_{cb}f_2(a^*a)^{\frac{1}{2}}\overline{g}_2(aa^*)^{\frac{1}{2}} \\
  & \; \leq (\|u_1\|_{cb}f_1+\|u_2^t\|_{cb}\overline{g}_2)(aa^*)^{\frac{1}{2}}(\|u_1\|_{cb}\overline{g}_1+\|u_2^t\|_{cb}f_2)(a^*a)^{\frac{1}{2}} \\
  & \; \leq (\|u_1\|_{cb}+\|u_2^t\|_{cb})\tilde{f}(aa^*)^{\frac{1}{2}}\tilde{g}(a^*a)^{\frac{1}{2}}.
\end{split}
\end{equation}
Hence, $\|u_1\|_{cb}+\|u_2^t\|_{cb} \geq C(T)^2 > \|T\|_{cb}^2 = \|u\|_{jcb}$. This proves the theorem.
\end{proof}

\section*{Acknowledgements}
The question if a $C^{\ast}$-algebraic proof of Theorem \ref{laattheorem:jcbgt} exists was suggested to me by Uffe Haagerup and Magdalena Musat. I thank them for many useful comments.

\end{document}